\documentclass[9pt]{amsart}

\usepackage{amssymb,amsmath,amsthm,amsfonts}
\usepackage{setspace}
\usepackage[margin=1in]{geometry}
\usepackage{mathabx}
\usepackage[usenames,dvipsnames,svgnames,table]{xcolor}
\usepackage{enumerate}
\usepackage{comment}
\usepackage{latexsym}
\usepackage{hyperref}
\hypersetup{
  colorlinks   = true,    
  urlcolor     = blue,    
  linkcolor    = blue,    
  citecolor    = purple      
}


\newtheorem{thm}{Theorem}[section]
\newtheorem{prop}[thm]{Proposition}

\newtheorem{cor}[thm]{Corollary}

\newtheorem{remark}[thm]{Remark}


\newcommand{\R}{\mathbb{R}}

\newcommand{\Z}{\mathbb{Z}}
\newcommand{\T}{\mathbb{T}}

\newcommand{\lb}{\langle}
\newcommand{\rb}{\rangle}
\renewcommand{\t}{\widetilde}
\renewcommand{\d}{\,\operatorname{d}\!}

\newcommand{\W}{\operatorname{W}}
\renewcommand{\Re}{\operatorname{Re}}
\renewcommand{\hat}{\widehat}
\newcommand{\nhl}{N_{\text{HL}}}
\renewcommand{\ni}{N_{\text{I}}}
\renewcommand{\i}{\operatorname{I}\!}
\newcommand{\nm}{N}

\hyphenation{di-men-sion-al}
\doublespacing 
\linespread{1.5}
\allowdisplaybreaks

\begin{document}

\title[Zakharov System on $\T$]{A Note on Global Existence for the Zakharov System on $\T$}

\author{E. Compaan}
\address{Department of Mathematics, Massachusetts Institute of Technology}
\email{compaan@mit.edu}

\begin{abstract}We show that the one-dimensional periodic Zakharov system is globally well-posed in a class of low-regularity Fourier-Lebesgue spaces. The result is obtained by combining the I-method with Bourgain's high-low decomposition method. As a corollary, we obtain probabilistic global existence results in $L^2$-based Sobolev spaces. We also obtain global well-posedness in $H^{\frac12+} \times L^2$, which is sharp (up to endpoints) in the class of $L^2$-based Sobolev spaces.
\end{abstract}

\maketitle


\section{Introduction \& Statement of Results}

In this note, we consider the periodic Zakharov system
\begin{equation*}
\begin{cases}
iu_t + \Delta u = nu, \qquad x \in\T, \; t \in \R, \\
n_{tt}-\Delta n = \Delta |u|^2,\\
u(x,0) = u_0(x),\\
n(x,0) = n_0(x), \quad n_t(x,0) = n_1(x).
\end{cases}
\end{equation*}
This system was introduced in the 1970s as a model of Langmuir turbulence in ionized plasma \cite{Zak}. The function $u$, the Schr\"odinger part, represents the envelope of a oscillating electric field, while the wave part $n$ represents the deviation from the mean of the ion density. The purpose of this note is to derive a global well-posedness result that holds for initial data at a low regularity level.

Recasting the Zakharov model as a first-order system by setting $n^\pm = n \pm i D^{-1} n_t$, where $D = (-\Delta)^\frac12$, we obtain
\begin{equation}\label{eq:ZS} 
\begin{cases}
iu_t + \Delta u = \frac12(n^+ + n^-)u, \qquad x \in\T, \; t \in \R, \\
in^\pm_t \mp Dn^\pm = \pm D|u|^2,\\
u(x,0) = u_0(x),\\
n^\pm(x,0) = n^\pm_0(x).
\end{cases}
\end{equation}
The Zakharov system \eqref{eq:ZS} conserves the Schr\"odinger part mass and a Hamiltonian energy:
\[ M(u) = \| u\|_{L^2}^2 \qquad \qquad E(u,n^\pm) = \| u_x \|_{L^2}^2 + \frac12 \Bigl( \| n^+\|_{L^2}^2 +  \| n^-\|_{L^2}^2 \Bigr)   + \frac12 \int (n^+ + n^-) |u|^2 \d x. \]
Note also that the mean values of $n$ and $n_t$ are preserved under the flow. Thus we may without loss of generality assume $n_0$ and $n_1$ are mean-zero. 
Sharp local well-posedness on $\T$ holds for initial data
\[ (u_0, n^\pm_0) \in H^\frac12(\T) \times L^2(\T). \]
This result is due to Takaoka \cite{Taka}, building on the result of Ginibre-Tsutsumi-Velo \cite{GTV} for the Euclidean space system. Well-posedness was proved used the Fourier restriction norm technique introduced in \cite{Bour2}.

The local result in sharp in the class of $L^2$-based Sobolev spaces. An earlier work of Bourgain \cite{Bour1} gives well-posedness in certain Fourier-Lebesgue type spaces. This result is neither stronger nor weaker than the Sobolev space result \cite{Taka}, in the sense than neither result implies the other. However, Bourgain's Fourier-Lebesgue space result also enables him to obtain a probabilistic global well-posedness result for \eqref{eq:ZS}. This will be discussed further below. 

The Hamiltonian conservation together with the Galgiardo-Nirenburg-Sobolev inequality yield global existence in the energy space $H^1(\T) \times L^2(\T)$. 

In the two-dimensional case, the sharp Sobolev space local theory remains at the same level -- it holds for initial data $(u_0, n^\pm_0) \in H^\frac12(\T^2) \times L^2(\T^2)$. However, to prove this, Kishimoto employed modified Besov-type Fourier restriction norms \cite{Kish2}. Similar arguments had previously been used by Bejenaru-Herr-Holmer-Tataru \cite{BHHT} to prove the parallel result for the Zakharov system on $\R^2$. We will use these norms as well in our work on $\T$. 

In the $\T^2$ setting, Kishimoto has also obtained global existence below the energy space, in $H^{\frac9{14}+}(\T^2) \times L^2(\T^2)$ for Schr\"odinger data with sufficiently small $L^2(\T^2)$ norm (in relation to that of the ground-state solution) \cite{Kish2}. The result is established using the I-method of Colliander-Keel-Staffilani-Takaoka-Tao \cite{CKSTT}. This result also applies to $\T$, since a solution to \eqref{eq:ZS} on $\T$ corrresponds to a solution on $\T^2$ which is constant along one spatial dimension. The constraint on norm of the Schro\"odinger part in $L^2$ is necessary on $\T^2$ to ensure that the Hamiltonian constrols the relevant Sobolev norm. 

In the one-dimensional case, Bourgain gave the following probabilistic global well-posedness result for \eqref{eq:ZS}:
\begin{thm}[\cite{Bour1}] \label{BourThm}
There are Sobolev exponents $0 < \sigma < s < \frac12 < \mu <1$ such that the Zakharov system \eqref{eq:ZS} is well-posed for data $(u_0, n_0, n_1)$ satisfying
\[ u_0 \in H^s(\T), \qquad \sup_k |k|^{\mu} | \hat{u_0}(k)| < \infty,\]
\[\sup_k |k|^{-\sigma} | \hat{n_0}(k)| < \infty, \qquad \sup_k | k |^{-\sigma - 1} | \hat{n_1}(k)| < \infty.\] Moreover, the system is almost surely globally well-posed  with respect to the normalized Gibbs measure associated with the Zakharov equation, which is supported on  $ \bigcap_{s < \frac12}\Bigl(H^s \times H^{s-1} \times H^{s-2}\Bigr)$. 
\end{thm}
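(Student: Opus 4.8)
The plan is to prove deterministic local well-posedness for \eqref{eq:ZS} in the Fourier--Lebesgue data class of the statement --- with a lifespan bounded below by a negative power of the norm --- which gives the first assertion, and then to globalize $\mu$-almost surely by Bourgain's invariant-measure iteration. For the \textbf{local theory} I would work in the spaces $\mathcal F^{s,\mu} = \{f : \|f\|_{H^s} + \sup_k \lb k\rb^\mu |\hat f(k)| < \infty\}$ for the Schr\"odinger datum and $\mathcal G^\sigma = \{f : \sup_k \lb k\rb^{-\sigma}|\hat f(k)| < \infty\}$ for the wave data, and set up Bourgain-type spaces adapted to the flows: $X^{s,\mu,b}$ for the Schr\"odinger component (measuring $\lb k\rb^s\lb \tau + k^2\rb^b\hat u$ in a mixed $\ell^\infty_k/\ell^2_k$ norm of $L^2_\tau$) and $Y^{\sigma,b}_\pm$ for $n^\pm$ adapted to the symbols $\tau \mp |k|$. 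The contraction argument on the Duhamel formulation then reduces to two bilinear estimates, bounding $nu$ in the Schr\"odinger space by $\|n^\pm\|_{Y_\pm}\|u\|_X$ and $D|u|^2$ in the wave space by $\|u\|_X^2$, which I would establish by exploiting the algebraic resonance identities of the Zakharov interaction exactly as in \cite{GTV, Taka}, replacing $\ell^2_k$-orthogonality by $\ell^\infty_k$ estimates where the Fourier--Lebesgue structure forces it. This yields existence, uniqueness, and continuous dependence on $[0,\delta]$ with $\delta \gtrsim (1 + \|(u_0,n_0^\pm)\|_{\mathcal F^{s,\mu}\times\mathcal G^\sigma\times\mathcal G^{\sigma+1}})^{-\theta}$ for some $\theta>0$ and the solution norm there controlled by twice the data norm; a standard approximation lemma also gives convergence of the frequency-truncated flows to this solution.

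For the \textbf{Gibbs measure}, the formal density is $Z^{-1}e^{-E(u,n^\pm)-M(u)}\,du\,dn^+dn^-$; splitting off the quadratic part of $E+M$ identifies the reference Gaussian $\rho = \rho_1\otimes\rho_2\otimes\rho_2$ under which $\hat u(k) = g_k/\lb k\rb$ and $\hat n^\pm(k) = h^\pm_k$ with independent normalized complex Gaussians, so $\rho_1$ lives on $\bigcap_{s<\frac12}H^s$ and each $\rho_2$ on $\bigcap_{s<-\frac12}H^s$; under $n^\pm\leftrightarrow(n,n_t)$ this is exactly the support in the statement. One must then realize $e^{-\frac12\int(n^++n^-)|u|^2}$ as an $L^1(d\rho)$ density: since $u\in\bigcap_{s<\frac12}H^s\hookrightarrow L^4$ almost surely, $|u|^2$ is $\rho_1$-a.s.\ in $L^2$, the pairing against $n^\pm$ is defined, and a Cameron--Martin shift in $n^\pm$ is legitimate; the resulting $u$-marginal is then shown to be integrable once an $L^2$ cutoff $\mathbf 1_{\{\|u\|_{L^2}\le B\}}$ (and, as needed, a truncation of the interaction functional) is inserted, using hypercontractivity of the Gaussian chaos generated by $\int(n^++n^-)|u|^2$. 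This realizes $\mu$ as a probability measure equivalent to $\rho$ on the cutoff set. Finally, since $|g_k| + |h^\pm_k| \lesssim (\log\lb k\rb)^{1/2}$ almost surely, $\sup_k \lb k\rb^{\mu-1}|g_k|$ and $\sup_k\lb k\rb^{-\sigma}|h^\pm_k|$ are $\mu$-a.s.\ finite whenever $\mu<1$ and $\sigma>0$ and satisfy Gaussian-type tails $\mu(\{\|\cdot\|_{\mathcal F^{s,\mu}\times\mathcal G^\sigma\times\mathcal G^{\sigma+1}} > R\}) \lesssim e^{-cR^2}$, so $\mu$ charges the local-theory class with full measure and integrable norm.

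For \textbf{invariance and globalization}, for each dyadic $N$ the system obtained by projecting the nonlinearities onto frequencies $|k|\le N$ is a finite-dimensional Hamiltonian ODE conserving the truncated energy and mass, so Liouville's theorem makes a truncated Gibbs measure $\mu_N$ invariant under its flow; since $\mu_N \to \mu$ and the truncated flows converge to the flow of \eqref{eq:ZS} (the approximation lemma), the full measure $\mu$ is invariant. Given $\epsilon, T > 0$, I would then combine the lifespan bound of Step 1 with invariance and the tail bound of Step 2 to estimate the probability that the solution leaves the $\mathcal F^{s,\mu}\times\mathcal G^\sigma\times\mathcal G^{\sigma+1}$ ball of radius $R$ at some $t\in[0,T]$ by $\lesssim (T/\delta(R))\,\mu(\{\|\cdot\| > cR\}) \lesssim TR^\theta e^{-c'R^2}$; choosing $R\sim(\log(T/\epsilon))^{1/2}$ makes this $<\epsilon$, so the solution is global on $[0,T]$ off a set of $\mu$-measure $<\epsilon$, and letting $\epsilon\downarrow 0$ along $T = 2^n$ with Borel--Cantelli gives almost-sure global existence; continuous dependence on $[0,T]$ again comes from Step 1. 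The main obstacle is Step 1: closing the bilinear estimates for $nu$ and $D|u|^2$ in the $\ell^\infty$-based Fourier--Lebesgue setting at the critically low wave regularity (the wave data being essentially at the level of white noise), where $L^2_x$ orthogonality is unavailable and all the smoothing must be extracted from the dispersive resonance relations; a close second is normalizability of the Gibbs density, which is genuinely delicate because the self-interaction of the Schr\"odinger field left after integrating out $n^\pm$ carries the focusing sign.
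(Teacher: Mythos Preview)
This theorem is not proved in the paper; it is stated with attribution to Bourgain \cite{Bour1} as background and motivation. The only discussion the paper offers is the paragraph immediately following the statement, which notes that Bourgain's almost-sure globalization proceeds by ``recasting the equation as an infinite dimensional Hamiltonian system for the spatial Fourier coefficients'' and appealing to invariance of the Gibbs measure, in the spirit of Lebowitz--Rose--Speer \cite{LRS} and Bourgain's earlier work on NLS \cite{Bour3}.

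Your proposal is a faithful high-level reconstruction of precisely that argument: a Fourier--Lebesgue local theory with a quantitative lifespan lower bound, construction and normalizability of the Gibbs measure (with the necessary $L^2$ cutoff and attention to the focusing sign that appears after integrating out the wave field), invariance via finite-dimensional Hamiltonian truncation and Liouville's theorem, and Bourgain's iteration to upgrade to almost-sure global existence. There is therefore nothing in the present paper to compare your outline against; you are sketching Bourgain's original proof, and the paper does not supply an alternative one.

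For context, the paper's own contribution (Theorem~\ref{mainThm}) is a \emph{deterministic} global result in a related but smoother Fourier--Lebesgue class, obtained by an entirely different mechanism --- a combination of Bourgain's high--low frequency decomposition with the $I$-method and Kishimoto's modified-energy machinery --- with no appeal to invariant measures. The probabilistic corollary the paper derives is then a consequence of that deterministic theorem together with the almost-sure Fourier decay of Gaussian data, not of a Gibbs-measure argument.
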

An examination of Bourgain's proof reveals that one should take $\sigma$ very close to zero, and $s$ and $\mu$ very close to one half. The almost-sure global existence was established by recasting the equation as an infinite dimensional Hamiltonian system for the spatial Fourier coefficients. The global well-posedness holds almost surely with respect to the Gibbs measure, a Sobolev space probability measure based on the Hamiltonian of the equation. This approach was inspired by Lebowitz-Rose-Speer \cite{LRS}. See also \cite{Bour3} for a similar argument for the nonlinear Schr\"odinger equation. 

Thus, global existence is known deterministically in $H^{\frac9{13}+}(\T) \times L^2(\T) $ and probabilistically in $H^{\frac12-}(\T)  \times H^{-\frac12-}(\T) $. The object of this note is to prove a global existence result which, at least partially, fills this gap. 
Note that one cannot simply use a preservation of regularity argument to conclude some sort of global existence between these spaces. This is because the support of the Gibbs measure is in $H^{\frac12-}\backslash H^{\frac12} \times H^{-\frac12-} \backslash H^{-\frac12}$; that is, the probabilistic result does not give any clear insight into the dynamics in smoother spaces.

The idea of our proof is to combine the high-low decomposition method of Bourgain \cite{Bour4} with the I-method of Colliander-Keel-Staffilani-Takaoka-Tao \cite{CKSTT}. This combination has appeared in the work of Bourgain for the quintic Schr\"odinger equation on $\T$ \cite{Bour5}. We note that we were motivated in part by the probabilistic global existence result of Colliander-Oh \cite{CoOh}, which employed the high-low method. We now give an outline of the method.

The high-low method works by breaking the initial data into high-frequency and low-frequency parts, at some cut-off value $N$. One then solves the system with the low-frequency (and hence smooth) data in the energy space. To obtain a solution to the system with the full initial data, one solves a difference equation for the remainder, with the high-frequency initial data. To iterate, it must be proved that the nonlinear part of the solution to the difference equation is in the energy space (i.e. smoother than the initial data) with small norm. One can then add this smoother part to the low-frequency solution and repeat the process. If one can obtain strong enough bounds, any time interval $[0,T]$ can be covered via iteration, by choosing $N$ large enough. 

The I-method works by applying a smoothing operator, again with some cut-off parameter $N$, to the system. A local theory is then obtained for this smoothed version of the system in the energy space. The next step is to show that a modified energy corresponding to the smoothed system is ``almost conserved", i.e. it doesn't not grow too rapidly. If the growth is slow enough, one can again iterate to cover any interval $[0,T]$ by choosing a sufficiently large cut-off value $N$. 

In general, the I-method will yield well-posedness at lower Sobolev regularities than the high-low method.  However, in our case the I-method cannot be directly applied. It requires a local theory in $L^2$-based Sobolev spaces whose norms can be controlled by the modified energy. However, there is no such local theory for wave data below $L^2$ for the Zakharov system. We therefore must subtract out the linear flow in order to solve in Sobolev spaces. This leaves a difference equation, which is no longer  expected to satisfy an almost-conservation law. 

On the other hand, if we attempt to apply the high-low method alone, we are required  to obtain estimates of the form
\[ \| u W(t)(n_0^\pm) \|_{X^{1, -\frac12, 1}} \lesssim \|n_0^\pm\|_{Y^{r,\frac12,1}} \| u \|_{X^{1, \frac12, 1}}.\] 
Here $W(t)(n_0^\pm)$ is the linear wave flow. The norms which appear here are defined below. The relevant point for the moment is that the left-hand side is a norm with Sobolev regularity one, while the wave flow is rougher than $L^2$. For such rough wave data, it seems impossible to obtain the necessary estimate. Randomization of the wave data does not appear to improve matters. In the $H^s \times H^r$ local theory, this case corresponds to the (unattainable) $s > r +1$ regime.  

In our work, we begin by splitting data into high- and low-frequency parts. For the low-frequency part, we apply an I-method local theory, and control growth via an almost-conservation law. The remainder solves a difference equation. We show that the nonlinear part of this difference equation lives in the space where the I-method applies, with small norm. We use this smallness with the almost-conservation law to iterate.

We take initial data of the following form. Fix $u_0 \in H^s(\T)$, for $s \in (\frac12, 1)$, and wave initial data such that:
\begin{align}\label{eq:wave_data}
n_0^\pm(x) = \sum_{k \in \Z/\{0\}} \frac{h_k^\pm}{\lb k \rb^\beta} e^{ikx} \qquad \text{with} \qquad \sup_k |h^\pm_k| < \infty.
\end{align}
That is, we take $n_0^\pm$ in the Fourier-Lebesgue space
\[ \mathcal{FL}^{\beta, \infty} := \Bigl\{ f \in \mathcal{D}'(\T) \; | \; \sup_k\, \lb k \rb^\beta |\hat{f}(k)| < \infty \Bigr\}.\] 
We fix $\beta \in (0, \frac12]$, so that $n_0^\pm  \in  H^r(\T)$
for any  $ r < \beta -  \frac12$. We require $\beta > s - \frac12$. This corresponds to requiring that $n_0^\pm$ is in a Sobolev space $H^r$ with $s-1 < r$. Note that the case $s-1 = r$ is critical in some sense. When $s-1 > r$, it seems impossible to obtain any well-posedness. Our main result is as follows.

\begin{thm} \label{mainThm}
Suppose $u_0 \in H^s(\T)$ and $n_0^\pm \in \mathcal{FL}^{\beta, \infty}$, with 
\[ s> \frac12 \quad \text{ and } \quad \beta >\frac1{2(2-s)}  . \]
Then for any $T >0$, the Zakharov system \eqref{eq:ZS} with initial data $(u_0, n_0^\pm)$ has a solution on the time interval $[0,T]$. Furthermore, the norm of the nonlinear part of the solution grows at most polynomially in time. In particular,
\begin{align*}
    max _{0 \leq t \leq T} \Bigl( \| u - e^{it\delta}u_0 \|_{H^s} + \| n^\pm - e^{\pm i t \partial_x} n_0^\pm \|_{L^2} \Bigr) \lesssim C(\|u_0\|_{H^s}, \|n_0^\pm\|_{\mathcal{FL}^{\beta, \infty}}, s, \beta) \lb T\rb ^{\max\{ \alpha(1-s)\gamma, (\frac12 - \beta) \gamma\} }, 
\end{align*}
where 
\[  \alpha \in \Biggl( 1 - 2\beta   , \min\Bigl\{\frac{\beta + \frac12 - s }{2(1-s)}, \; \frac{2\beta - s   }{1-s}  \Bigr\} \Biggr)\] 
and 
\[ \gamma > \max\Bigr\{ \frac{1}{(2s-1)\alpha}, \frac{1}{\beta + \frac12 - s - 2\alpha(1-s)}, \frac1{\alpha + 2\beta -1   },  \frac{1}{2\beta - s    - \alpha(1-s)} \Bigl\}.  \]
\end{thm}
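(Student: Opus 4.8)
The plan is to implement the hybrid high-low/I-method scheme sketched in the introduction, with the cut-off parameter $N$ (to be chosen as a power $\langle T\rangle^\gamma$) doing double duty: it is the frequency threshold that splits the data and also the parameter in the smoothing operator $\i$. First I would fix $N$ and write $u_0 = \i u_0 + (u_0 - \i u_0) =: u_0^{lo} + u_0^{hi}$ and similarly for $n_0^\pm$, arranging that the low-frequency piece lies in the energy space $H^1\times L^2$ with norm $\lesssim N^{1-s}\|u_0\|_{H^s}$ (resp.\ $\lesssim N^{\frac12-\beta}\|n_0^\pm\|_{\mathcal{FL}^{\beta,\infty}}$), while the high-frequency tail is small in $H^s$ (resp.\ in a suitable $\mathcal{FL}$-based or $L^2$ norm). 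One then runs two coupled solution constructions on short time steps of length $\delta$: (i) the low-frequency data is evolved by the I-smoothed Zakharov system, whose local theory in the energy space is the one built on the Besov-type $X^{s,b}$ norms of Kishimoto/BHHT referenced above, and (ii) the remainder $(v, m^\pm) := (u,n^\pm) - (u^{lo}, n^{lo,\pm})$ solves the difference equation obtained by subtracting the two systems. Crucially, since there is no $L^2$-level local theory for rough wave data, one subtracts the linear wave flow $e^{\pm it\p_x} n_0^{hi,\pm}$ from $m^\pm$ as well and solves only for the Duhamel (nonlinear) part of the remainder.

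The heart of the argument is the pair of estimates needed to iterate. First, an \emph{almost-conservation law} for the modified energy $E(\i u^{lo}, n^{lo,\pm})$ of the smoothed low-frequency system: following CKSTT, one commutes $\i$ through the nonlinearity, and the error terms — controlled by multilinear $X^{s,b}$ estimates on $\T$ in the Besov-modified spaces — should yield an increment bounded by $N^{-\theta}$ over each step, for some $\theta>0$ depending on $s$; this is what forces the various lower bounds on $\gamma$. Second, a \emph{smoothing estimate} for the nonlinear part of the difference equation: one must show that, although $v$ and the Duhamel part of $m^\pm$ start out only at regularity $s$ (resp.\ below $L^2$), the Duhamel integrals gain enough derivatives to land back in $H^1\times L^2$ with norm small relative to the low-frequency solution's energy-space norm. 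This gain comes from the off-diagonal structure of the bilinear interactions $n u$ and $D|u|^2$ — precisely the resonance analysis underlying Takaoka's sharp local theory — combined with the frequency localization (at least one factor is high, $\gtrsim N$). Quantitatively, I expect a bound of the shape $N^{-\kappa}$ times polynomial factors in the data norms and in $\delta^{-1}$, with $\kappa$ a positive exponent in $s$ and $\beta$; the constraint $\beta > \tfrac1{2(2-s)}$ is exactly the threshold at which $\kappa$ becomes positive for some admissible $\alpha$.

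With these two estimates in hand, I would set up the iteration: on $[0,\delta]$ produce $(u^{lo},n^{lo,\pm})$ and the remainder, add the (small, energy-space) nonlinear part of the remainder to the low-frequency solution to form the new low-frequency datum at time $\delta$, leave the linear wave flow and the persistent high-frequency Schr\"odinger tail untouched, and repeat. After $m \sim T/\delta \sim N^{(2s-1)\alpha}$ steps (the step length $\delta$ scales like $N^{-\alpha(2s-1)}$ via the local theory), the accumulated energy growth is the number of steps times the per-step increment $N^{-\theta}$, plus the accumulated nonlinear remainders; one checks that for $N = \langle T\rangle^\gamma$ with $\gamma$ above the stated maximum, the modified energy stays $\lesssim N^{2(1-s)}$ throughout, so the iteration closes and covers all of $[0,T]$. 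Tracking the growth of $\|u - e^{it\delta}u_0\|_{H^s}$ and $\|n^\pm - e^{\pm it\p_x}n_0^\pm\|_{L^2}$ through the $m$ steps then gives the asserted polynomial-in-$\langle T\rangle$ bound, with exponent $\max\{\alpha(1-s)\gamma, (\tfrac12-\beta)\gamma\}$ reflecting whether the Schr\"odinger or the wave low-frequency truncation dominates.

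The main obstacle, I expect, is the simultaneous bookkeeping of the three competing scales — the energy-space norm $\sim N^{1-s}$ (or $N^{\frac12-\beta}$) of the low-frequency solution, the per-step modified-energy increment, and the per-step nonlinear-remainder size — each of which is a product of a power of $N$, a power of $\delta^{-1}$, and a power of the (growing) data norms. Making the exponents line up so that a single choice of the free parameter $\alpha$ (hence the admissible window for $\alpha$) and a single $\gamma$ closes all inequalities simultaneously is delicate, and the rather baroque constraints in the theorem statement are the residue of that optimization. The technical heart within that is establishing the smoothing estimate for the difference equation's Duhamel term in the Besov-modified $X^{s,b}$ spaces when one input is the rough free wave flow $e^{\pm it\p_x} n_0^{hi,\pm}$ with $n_0^\pm \in \mathcal{FL}^{\beta,\infty}$, $\beta \le \tfrac12$ — this is the step that genuinely uses the Fourier-Lebesgue hypothesis rather than mere Sobolev regularity, and it is where the condition $\beta > \tfrac1{2(2-s)}$ is born.
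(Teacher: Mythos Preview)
Your overall strategy---hybrid high-low/I-method, almost-conservation of a modified energy for the low-frequency evolution, a smoothing estimate for the Duhamel part of the difference equation with the free wave flow subtracted out, and the observation that this last estimate is where the $\mathcal{FL}^{\beta,\infty}$ hypothesis is genuinely used---matches the paper's approach. But there is one structural error that would prevent the argument from closing, and one simplification you are missing.

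\textbf{The genuine gap: one cutoff versus two.} You propose a single parameter $N$ doing double duty as both the high-low threshold and the I-operator parameter, with $\alpha$ entering as a free exponent in the step-size relation $\delta\sim N^{-\alpha(2s-1)}$. This is not how $\alpha$ arises. In the paper there are \emph{two} independent frequency parameters: $\nhl$ for the high-low split of the wave data and $\ni$ for the I-operator, and one sets $\ni=\nhl^\alpha$. The step size $\delta$ is not free; it is dictated by the local theory as $\delta\approx\bigl(\ni^{1-s}+\nhl^{1/2-\beta}\bigr)^{-2-}$. The parameter $\alpha$ is then tuned so that the per-step energy increment from the almost-conservation law (governed by $\ni$) and the per-step contribution from the difference-equation nonlinear part (governed by $\nhl^{s-1/2-\beta}$) can be balanced. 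Your single-parameter scheme corresponds to forcing $\alpha=1$, and a direct check of the admissible window in the theorem shows $\alpha=1$ requires $\beta>\max\{3/2-s,\,1/2\}$, which is vacuous in the regime $\beta\le 1/2$ of interest. So with one $N$ the iteration does not close; the decoupling of $\ni$ from $\nhl$ is essential.

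\textbf{A simplification you miss.} The paper does \emph{not} split $u_0$ at all. Since $u_0\in H^s$ with $s>1/2$, the I-operator alone handles the Schr\"odinger side; only the wave data $n_0^\pm$ is decomposed into high and low frequencies. Consequently the difference equation has \emph{zero} Schr\"odinger initial data and wave initial data $P_{>\nhl}n_0^\pm$, so there is no ``persistent high-frequency Schr\"odinger tail'' to track. (Your proposed split $u_0=\i u_0+(u_0-\i u_0)$ is also awkward since $\i$ is not a projection.) This makes the bookkeeping cleaner than you anticipate.
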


\begin{remark}
	It is probable that our methods would also apply to rougher Schr\"odinger data (that is, to $ u_0 \in \mathcal{FL}^{\alpha, \infty}$, with $\alpha \leq 1$). However, we do not pursue this here. 
\end{remark}

\begin{remark}We note that Bourgain gives a local theory result for wave data in a subset of $H^{-\sigma}$, with $0 < \sigma \ll 1$. We provide an alternate proof of this, which allows $0 < \sigma < \frac16$ and is couched in terms suited to our proof of deterministic global existence. 
\end{remark}

The theorem above implies a probabilistic well-posedness result as well. To state it, we first recall the definition of the Gaussian measure associated to $H^s(\T)$. The measure is given by 
\[ \d \mu_s = Z_s^{-1} e^{-\frac12 \| n\|_{H^s}^2} \prod_{x \in \T} \d n(x). \]
A typical element in the support of the measure is of the form
\[ n = n^\omega = \sum_{k \in \Z} \frac{g_k(\omega)}{\lb k \rb^s} e^{ikx}, \]
where $\{g_k(\omega)\}_k$ are independent standard Gaussian random variables. We see that the $n$ is $\omega$-almost-surely in $H^{s-\frac12-}\backslash H^{s-\frac12}$, so the measure is supported on $\bigcap_{r< s - \frac12} H^{r}(\T)$. This leads to the following corollary.

\begin{cor}
The Zakharov system \eqref{eq:ZS} is almost-surely globally well-posed for initial data in the space $H^s \times H^r$ endowed with the Gaussian probability measure if
\[ s> \frac{1}{2} \quad \text{and} \quad r >  -\frac{1-s}{2(2-s)}. \] 
In particular, we have global existence almost-surely in $H^{\frac12+} \times H^{-\frac1{6}+}$.
\end{cor}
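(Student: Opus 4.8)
The plan is to deduce the corollary directly from Theorem \ref{mainThm}: the analytic content lies entirely in the main theorem, so the only task is to check that Gaussian wave data of the stated regularity almost surely satisfies its hypotheses. First I would fix $u_0 \in H^s(\T)$ with $s > \frac12$ and fix $r$ with $r > -\frac{1-s}{2(2-s)}$, and randomize the wave data via the Gaussian measure whose typical element has regularity $r$, namely $\mu_\rho$ with $\rho := r + \frac12$, so that
\[ n_0^\pm = n_0^{\pm,\omega} = \sum_{k \in \Z \setminus \{0\}} \frac{g_k^\pm(\omega)}{\lb k \rb^{\rho}}\, e^{ikx}, \]
with the $g_k^\pm$ independent standard Gaussians subject to $g_{-k}^\pm = \overline{g_k^\pm}$. (Equivalently one may randomize the physical data $(n_0,n_1)$ at regularities $r$ and $r-1$ and set $n_0^\pm = n_0 \pm i D^{-1} n_1$; the resulting fields $n_0^\pm$ are jointly Gaussian but need not be independent, which is harmless below.) By construction this data is $\mu_\rho$-distributed, hence almost surely lies in $\bigcap_{r' < r} H^{r'}(\T)$.

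Next I would record the standard observation that in the Fourier--Lebesgue scale Gaussian randomization gains almost a full derivative over the $L^2$-based scale. For any $\epsilon > 0$ one has $\lb k \rb^{\rho - \epsilon}|\hat{n_0^{\pm,\omega}}(k)| = \lb k \rb^{-\epsilon}\,|g_k^\pm(\omega)|$, and from the Gaussian tail bound $\sum_k \mathbb{P}\bigl(|g_k^\pm| > \lb k \rb^{\epsilon}\bigr) \lesssim \sum_k e^{-\lb k \rb^{2\epsilon}/2} < \infty$ together with Borel--Cantelli (applied to $\{g_k^+\}$ and $\{g_k^-\}$ and intersecting the two full-measure events), almost surely $|g_k^\pm(\omega)| \le \lb k \rb^{\epsilon}$ for all but finitely many $k$. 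Hence almost surely $\sup_k \lb k \rb^{\rho - \epsilon}|\hat{n_0^{\pm,\omega}}(k)| < \infty$, i.e. $n_0^{\pm,\omega} \in \mathcal{FL}^{\rho - \epsilon, \infty}$; letting $\epsilon \downarrow 0$ along a sequence, almost surely $n_0^{\pm,\omega} \in \bigcap_{\beta < \rho} \mathcal{FL}^{\beta, \infty}$.

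It then remains to choose $\beta$ to match Theorem \ref{mainThm}. Because $r > -\frac{1-s}{2(2-s)}$, the elementary identity $\frac{1}{2(2-s)} - \frac12 = -\frac{1-s}{2(2-s)}$ gives $\rho = r + \frac12 > \frac{1}{2(2-s)}$; and $\frac{1}{2(2-s)} < \frac12$ since $s < 1$. Thus I can fix $\beta$ with $\frac{1}{2(2-s)} < \beta < \rho$ and $\beta \le \frac12$ (take $\beta$ slightly below $\min\{\rho, \tfrac12\}$). For this $\beta$, almost surely $n_0^{\pm,\omega} \in \mathcal{FL}^{\beta,\infty}$, and the hypotheses $s > \frac12$, $\beta > \frac{1}{2(2-s)}$ of Theorem \ref{mainThm} hold, so for almost every $\omega$ and every $T > 0$ the system \eqref{eq:ZS} with data $(u_0, n_0^{\pm,\omega})$ admits a solution on $[0,T]$ with the stated polynomial-in-$\lb T\rb$ bound. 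For each fixed such $\omega$ these solutions are compatible across time intervals by the uniqueness in the local theory underlying Theorem \ref{mainThm}, so this is a genuine almost-sure global well-posedness statement. Sending $s \downarrow \frac12$ makes $\frac{1}{2(2-s)} \downarrow \frac13$, so $r$ may be pushed down to $\frac13 - \frac12 = -\frac16$, yielding the endpoint case $H^{\frac12+} \times H^{-\frac16+}$.

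I do not anticipate a real obstacle beyond Theorem \ref{mainThm} itself; the only delicate points are bookkeeping ones — verifying the identity $\frac{1}{2(2-s)} - \frac12 = -\frac{1-s}{2(2-s)}$ so that the admissible range of $r$ matches exactly the admissible range of $\beta$, and keeping in mind that the improvement of this probabilistic result over the deterministic Fourier--Lebesgue result in Theorem \ref{mainThm} comes precisely from $\mathcal{FL}^{\beta,\infty}$ controlling only a supremum rather than an $\ell^2$ sum, which is what lets random data gain nearly a full derivative relative to the $L^2$-based scale.
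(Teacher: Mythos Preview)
Your proposal is correct and follows essentially the same route as the paper: both deduce the corollary from Theorem~\ref{mainThm} by observing that Gaussian data at Sobolev level $r$ almost surely lies in $\mathcal{FL}^{\beta,\infty}$ for any $\beta < r + \tfrac12$, and then checking that $r > -\tfrac{1-s}{2(2-s)}$ translates exactly into $r + \tfrac12 > \tfrac{1}{2(2-s)}$ so that an admissible $\beta$ exists. Your version is simply more explicit (spelling out the Borel--Cantelli step and the constraint $\beta \le \tfrac12$) where the paper is terse.
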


The corollary follows by noting that typical data  in $H^r$ is of the form
\[ n_0 = n^\omega_0 = \sum_{k \in \Z} \frac{g_k(\omega)}{\lb k \rb^{r + \frac12 + \epsilon}} e^{ikx}, \]
and thus its Fourier coefficients satisfy
\[ \sup_k \lb k \rb^{ r + \frac12}|\hat{n_0}(k)| < \infty \]
almost surely. Hence the conditions of Theorem \ref{mainThm} are met for $r = \beta - \frac12$. 

As a corollary of the proof of the main theorem, we also have the following Sobolev space global well-posedness result, which matches the sharp local theory up to the endpoint. 

\begin{cor}
The one-dimensional Zakharov system \eqref{eq:ZS} is globally well-posed for initial $(u_0, n_0^\pm) \in H^{s}(\T) \times L^2(\T)$ for any $s > \frac12$.
\end{cor}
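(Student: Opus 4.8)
The plan is to exploit the fact that, when the wave data lies in $L^2$ rather than in a rough Fourier--Lebesgue space, the obstruction described in the introduction vanishes. Indeed $L^2$ is already a space in which a local theory holds (Takaoka's result \cite{Taka} in $H^{1/2}(\T)\times L^2(\T)$), so there is no need to subtract off the linear wave flow, and hence no need for a high-low decomposition of the wave part. One can therefore run just the I-method component of the proof of Theorem \ref{mainThm} directly on the full system \eqref{eq:ZS}, with the difference equation for a high-frequency remainder simply absent. Only the range $s\in(\tfrac12,1)$ requires an argument, since for $s\ge 1$ global existence already follows from energy conservation in $H^1(\T)\times L^2(\T)$; in particular the result will be global for every $s>\tfrac12$, matching the sharp local theory up to the endpoint.

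Concretely, I would fix $T>0$ and a large parameter $N$, and let $I=I_N$ be the standard I-operator: the Fourier multiplier equal to the identity on $\{|k|\le N\}$ and to $(N/|k|)^{1-s}$, suitably smoothed, on $\{|k|>N\}$, so that $\|f\|_{H^s}\lesssim\|If\|_{H^1}\lesssim N^{1-s}\|f\|_{H^s}$ with constants independent of $N$. Applying $I$ to \eqref{eq:ZS} yields the I-system, and one works with the modified energy $E(Iu,In^\pm)$ obtained by inserting $I$ into the Hamiltonian $E$ (adding correction terms if needed to improve the almost-conservation rate). The first step is a local well-posedness theory for the I-system in the energy space $H^1(\T)\times L^2(\T)$ on a time interval whose length depends only on $\|Iu_0\|_{H^1}+\|In_0^\pm\|_{L^2}$; this follows from the bilinear estimates in the Besov-type spaces $X^{s,b,p}$ and $Y^{r,b,p}$ already established in the proof of Theorem \ref{mainThm}, specialized to regularity $(1,0)$, together with the standard interpolation/commutator lemma for $I$, and with no linear flow removed. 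The second step is coercivity of the modified energy: estimating $\tfrac12\int(In^+ + In^-)|Iu|^2\d x$ by the one-dimensional Gagliardo--Nirenberg inequality and the conservation of mass $\|u(t)\|_{L^2}=\|u_0\|_{L^2}$, and absorbing it into $\|(Iu)_x\|_{L^2}^2$ by Young's inequality, one obtains
\[ \|Iu\|_{H^1}^2 + \|In^+\|_{L^2}^2 + \|In^-\|_{L^2}^2 \;\lesssim\; E(Iu,In^\pm) + C(\|u_0\|_{L^2}) \]
with \emph{no} smallness assumption on $\|u_0\|_{L^2}$ --- this is exactly where dimension one is essential, in contrast with the $\T^2$ situation.

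The hard part will be the almost-conservation law: over one step of the local theory one must show that
\[ \bigl| E(Iu,In^\pm)(t_1) - E(Iu,In^\pm)(t_0) \bigr| \;\lesssim\; N^{-\theta} \]
for some $\theta=\theta(s)>0$, with implicit constant controlled by the quantities appearing above. I would prove this by differentiating $E(Iu,In^\pm)$ along the flow and substituting the equation; after symmetrization the defect is governed by commutators of $I$ with the nonlinearity, which I would estimate in the Besov-type Bourgain spaces by exploiting the smoothing $I$ supplies on high-frequency outputs together with the local bounds from the preceding step. It is precisely here that the hypothesis $s>\tfrac12$ is used: it is what leaves a positive power of $N$ to spare in the worst Schr\"odinger interaction (the $L^2$ wave contributions of this corollary being comparatively harmless), so that $\theta>0$.

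Finally I would iterate. Coercivity together with $\|f\|_{H^s}\lesssim\|If\|_{H^1}$ gives $E(Iu_0,In_0^\pm)\lesssim N^{2(1-s)}\bigl(\|u_0\|_{H^s}^2+\|n_0^\pm\|_{L^2}^2+1\bigr)=:E_0$. As long as the modified energy has not reached $2E_0$ one has $\|Iu\|_{H^1}^2\lesssim E_0$, so the local theory runs on time intervals of length $\delta\gtrsim E_0^{-\kappa}$ for some $\kappa>0$, and the almost-conservation law bounds the growth of the modified energy by $N^{-\theta}$ per interval. Hence the modified energy stays below $2E_0$ for at least $\sim E_0 N^{\theta}$ intervals, i.e.\ on a time interval whose length is a positive power of $N$ up to powers of the data norms. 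Choosing $N$ a suitable positive power of $\lb T\rb$ then covers $[0,T]$, and undoing $I$ yields an a priori bound of the form
\[ \max_{0\le t\le T}\bigl( \|u(t)\|_{H^s} + \|n^\pm(t)\|_{L^2} \bigr) \;\lesssim\; \lb T\rb^{C(s)}\bigl( \|u_0\|_{H^s} + \|n_0^\pm\|_{L^2} + 1 \bigr)^{C(s)}, \]
which, combined with the local theory, gives global well-posedness in $H^s(\T)\times L^2(\T)$ for every $s>\tfrac12$. The one point of bookkeeping to verify, beyond the almost-conservation estimate itself, is that the exponents appearing above are all positive, so that a finite $N$ suffices for each fixed $T$.
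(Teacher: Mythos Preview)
Your plan is essentially the paper's own route to the corollary: when $n_0^\pm\in L^2$ the high--low decomposition becomes vacuous (take $\nhl=\infty$, so there is no high part and no difference equation), and what remains of the proof of Theorem~\ref{mainThm} is exactly the I-method iteration you describe, driven by Kishimoto's modified energy $\t H$ and the almost-conservation bound of Proposition~\ref{growth_bound}. The observation that in one dimension Gagliardo--Nirenberg plus mass conservation gives coercivity without any smallness constraint on $\|u_0\|_{L^2}$ is precisely the point that makes the result unconditional here (unlike on $\T^2$).

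One correction is needed in your bookkeeping. You write the modified energy as $E(Iu,In^\pm)$, applying $I$ to both components. In the present setup the wave part is already at the energy regularity $L^2$, and the correct modified energy (the one used in the paper, following Kishimoto) carries $I$ only on the Schr\"odinger part: $\t H(u,n^\pm)=\|\i u\|_{\dot H^1}^2+\tfrac12\|n^\pm\|_{L^2}^2+\ldots$. This matters for the iteration: the local theory for the I-system runs on intervals of length $\delta\approx\bigl(\|\i u_0\|_{H^1}+\|n_0^\pm\|_{L^2}\bigr)^{-2-}$, so you must control $\|n^\pm(t)\|_{L^2}$ itself, not $\|\i n^\pm(t)\|_{L^2}$. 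Since the multiplier $m(k)$ decays at high frequencies, $\|\i n^\pm\|_{L^2}$ does not dominate $\|n^\pm\|_{L^2}$, and an energy that only sees $\i n^\pm$ would not close the loop. Once you drop $I$ from the wave component (and, if you want the sharpest decay rate, include Kishimoto's correction multipliers $\sigma^\pm$ rather than the bare $E$), your argument goes through and coincides with the paper's.
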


We will work with the Besov-type Fourier restriction norms defined by 
\begin{align*}
\| u \|_{X^{s,b, 1}} &= \left\| N^s L^b \| P_{N,L}u\|_{L^2_xL^2_t} \right\|_{\ell^2_N \ell^1_L}, \\
\| n \|_{Y^{s,b, 1}_\pm } &= \left\| N^s L^b \| Q^\pm _{N,L}n\|_{L^2_xL^2_t} \right\|_{\ell^2_N \ell^1_L}, 
\end{align*}
where $N\gtrsim 1$ and $L \gtrsim 1$ are dyadic and the frequency restriction operators are 
\begin{align*}
    \hat{P_{N,L}u}(k, \tau) &:= \chi_{|k| \approx N} \chi_{|\tau - k^2| \approx L } \hat{u}(k,\tau) \\
    \hat{Q^\pm_{N,L}n}(k, \tau) &:= \chi_{|k| \approx N} \chi_{|\tau \pm |k|| \approx L } \hat{n}(k,\tau). 
\end{align*} 
We also have time-localized versions of these spaces, denoted $X^{s,b, 1}_\delta$ and $Y^{s,b,1}_{\pm \delta}$, which are defined in the usual fashion. These norms were used for the Zakharov system in \cite{BHHT}; their properties may be found there. In the following, we often drop the $\pm$ from the wave part notation for simplicity.

\section*{Acknowledgments}
The author is grateful to Professor G. Staffilani for many helpful discussions. This work was supported by  NSF MSPRF \#1704865.

\section{Proof of Theorem \ref{mainThm}}

\begin{proof}
The idea of the proof is to combine the high-low method of Bourgain with an I-method argument. In the following, $\nhl \gg 1$ is the frequency cut-off associated with the high-low method part of the argument, and $\ni \gg 1$ is that associated with the I-method portion of the argument. These parameters will be fixed later. 

Begin by splitting the initial data $n_0^\pm$ into high and low parts:
\begin{align*}
n^\pm_0(x) =  P_{\leq \nhl} n_0^\pm + P_{n > \nhl}n_0^\pm =: n_0^{\pm \text{L}}(x) + n_0^{\pm \text{H}}(x).
\end{align*}
Here $P_{\leq \nhl}$ denotes the spatial frequency projection onto frequencies of magnitude at most $\nhl$, i.e.
\[ \hat{P_{\leq \nhl} f} := \chi_{ \{|k| \leq \nhl\}} \hat{f} \quad \text{and} \quad  P_{> \nhl} := \operatorname{Id} - P_{\leq \nhl}. \]

Let $(u^1,n^{\pm1})$ be the solution to \eqref{eq:ZS} with initial data $(u_0, n_0^{\pm \text{L}})$. To obtain this solution, we use the local theory adapted to the I-method from \cite[Prop. 4.5]{Kish2}. This says that we can obtain a solution $(u^1,n^{\pm1}) \in X^{s,\frac12,1}_\delta \times Y^{0,\frac12,1}_{\pm,\delta}$ on $[0,\delta]$, for 
\begin{equation*}
\label{eq:deltaDef} 
\delta \approx \left( \| \i u_0 \|_{H^1} + \| n_0^{\pm \text{L}} \|_{L^2} \right) ^{-2-}. \end{equation*}

Here the smoothing operator $\i: H^s \to H^1$ is defined by
\begin{equation*}
    \hat{\i f}(\xi) = m(\xi) \hat{f}(\xi), \quad \text{where} \quad m(\xi) = \begin{cases}
     1 &\text{for  } |\xi| <  \ni, \\
     (\ni/|\xi|)^{1-s} &\text{for  } |\xi| > 2\ni.
    \end{cases}
\end{equation*}
The multiplier $m$ is taken to be smooth and non-increasing in $|\xi|$. The multiplier, and hence $\i\;$ itself, is dependant on the choice of $\ni$, but we elect to suppress this in the notation for simplicity. 

For initial data satisfying $\| u_0\|_{H^s} = K$ and $\sup_k | \hat{n_0^{\pm \text{L}}}(k)| \leq K \lb k \rb^{-\beta}$, we may choose \footnote{If $\beta = \frac12$, we must include an additional factor of $\log \nhl$. This is harmless, so the case is ommitted for simplicity.} 
\begin{equation*} 
\delta \approx  \Bigl[  K \ni^{1-s} +  K \nhl^{\frac12 -\beta} \Bigr]^{-2-} =: N^{-2-}. 
\end{equation*}
and obtain the bound
\begin{equation}\label{eq:normBound} \delta^{\frac12-} \left( \| \i u^1 \|_{X^{1,\frac12,1}_\delta} + \| n^{\pm1} \|_{Y^{0,\frac12,1}_{\pm,\delta}} \right) \lesssim 1. 
\end{equation}
To continue the argument, we need a local theory for the difference equation which results when we subtract this solution $(u^1,n^{\pm1})$ from the solution for the Zakharov system \eqref{eq:ZS} with the full initial data $(u_0,n_0)$. This local theory is given by Proposition \ref{localtheory} below. Essentially, it says that as long as $|\hat{n_0^{\pm \text{H}}}(k)| \leq C_1 \lb k \rb^{-\beta}$, the difference equation can be solved on $[0,\delta]$, and the $H^s \times L^2$ norm of its nonlinear part is at most order $\nhl^{s-\beta - \frac12  }$. That is, the nonlinear part is small and smooth. \footnote{This holds as long as $\delta$ is sufficiently small. Specifically, it is necessary that $\delta^{\epsilon_0-} C_1 \lesssim 1$, where $\epsilon_0 \ll 1$ is the implicit constant in the exponent of \eqref{eq:normBound}. That is, $\delta^{\frac12-\epsilon_0} \Bigl( \| \i u^1 \|_{X^{1,\frac12,1}_\delta} + \| n^{\pm1} \|_{Y^{0,\frac12,1}_{\pm,\delta}} \Bigr) \lesssim 1 $. This is possible to achieve for arbitrarily large $K$ and $C_1$ and arbitrarily small $\epsilon_0$ by taking $\ni$ and $\nhl$ sufficiently large.} 

We add this nonlinear part $(v^1, \t{m}^{\pm1})$ to $(u^1(\delta), n^{\pm1}(\delta))$ and evolve according to \eqref{eq:ZS} again to obtain functions $(u^2,n^{\pm2})$ which solve \eqref{eq:ZS} on the interval $[\delta, 2 \delta]$. At time $2\delta$, we again add in the smooth part $(v^2, \t{m}^{\pm2})$ of the solution to the difference equation, and again evolve according \eqref{eq:ZS} to obtain $(u^3, n^{\pm3})$. This can continue as long as the norm of the solution does not grow too much. To control the growth, we carefully choose the frequency thresholds $\nhl$ and $\ni$. 

To understand the growth of the norm, we study the Hamiltonian energy. The growth of the Hamiltonian is bounded as follows. Recall that $(u^j,n^{\pm j})$ is defined on the interval $[(j-1)\delta, j \delta]$ with \[(u^{j}, n^{\pm j })(\delta(j-1)) = \Bigl(u^{j-1} + v^{j-1}, n^{\pm j-1 } + \t{m}^{\pm j-1 }\Bigr)(\delta(j-1)) .\]
Then the growth of the Hamiltonian is bounded by
\begin{align*}
     \Bigl|H(\i u^J, n^{\pm J})(\delta J) - H(\i u^1, n^{\pm 1} )(0)\Bigr| 
    &\leq 
    \Bigl| H(\i u^J, n^{\pm J})(\delta J) - \t{H}( u^J, n^{\pm J})(\delta J) \Bigr| \\
    & + \sum_{j=1}^J \Bigl| \t{H}( u^j, n^{\pm j})(\delta j) - \t{H}( u^j, n^{\pm j})(\delta (j-1)) \Bigr| \\
    & + \sum_{j=1}^{J-1}\Bigl|  \t{H}(u^{j} + v^{j}, n^{\pm j } + \t{m}^{\pm j }) (\delta j) -\t{H}(u^{j}, n^{\pm j } ) (\delta j) \Bigr|\\
    & + \Bigl| \t{H}( u^1, n^{\pm 1})(0) - {H}(\i u^1, n^{\pm 1})(0) \Bigr|, 
\end{align*}
where $\t{H}$ is the modified Hamiltonian given in \cite[Section 3]{Kish2}. It is defined by
\[ \t{H}(u,n^\pm) = \| \i u \|_{\dot{H}^1}^2 + \frac12 \| n^\pm \|_{L^2}^2 + \frac12 \sum_{\sum k_j =0} \hat{u}(k_1) \hat{\overline{u}}(k_2) \Bigl[ \sigma^+(k_1,k_2) \hat{n}^+(k_3) +\sigma^-(k_1,k_2) \hat{n}^-(k_3) \Bigr] .\] 
Here $\sigma^\pm$ is the bounded nonsingular Fourier multiplier operator defined in \cite[(3.2)-(3.4)]{Kish2}. The effect of the multiplier $\sigma^\pm$ is to eliminate certain low-order terms from $\frac{\d}{\d t} \hat{H}(u,n^\pm)$, resulting in a more favorable growth bound for the modified Hamiltonian. The precise definition of $\sigma^\pm$ is rather lengthy (and not used here) so we omit it. 

We proceed by bounding the growth of each term in the sum above. By \cite[Prop. 3.2]{Kish2}, for $s > \frac12$, we can bound the difference between the Hamiltonian energy and the modified Hamiltonian by 
\begin{multline*}
    \Bigl| H(\i u^J, n^{\pm J})(\delta J) - \t{H}( u^J, n^{\pm J})(\delta J) \Bigr| 
     + \Bigl| \t{H}( u^1, n^{\pm 1})(0) - {H}(\i u^1, n^{\pm 1})(0) \Bigr| \\
     \lesssim \ni^{-1+}  \Bigl( \| \i u^J(\delta J) \|_{H^1}^2 \| n^{\pm J}(\delta J) \|_{L^2} + \| \i u_0 \|_{H^1}^2 \| n^{\pm }_0 \|_{L^2} \Bigr) \lesssim \ni^{-1+} \nm^3.
\end{multline*}
 
Next we bound the growth of the modified energy under the Zakharov flow. We use the following estimate. It is almost identical to the $\T^2$ bound \cite[Prop. 4.1]{Kish2}. However, it is slightly stronger because of more favorable estimates available in the one-dimensional case. The proof is in Section \ref{prf_of_growth_bound}. 

\begin{prop}\label{growth_bound}
Fix $\frac12 < s < 1$ and $\delta \in (0,1)$. Suppose $(u, n^\pm)$ is a smooth solution to \eqref{eq:ZS} on the time interval $[0,\delta]$. Then
\begin{multline*}
    | \t{H}(u, n^\pm)(\delta) - \t{H}(u, n^\pm)(0)| \lesssim 
    \ni^{-1+} \delta^{\frac12 -} 
    \| \i u \|_{X^{1,\frac12,1}_\delta}^2 
    \| n^\pm \|_{Y^{0,\frac12,1}_{\pm\delta}} \\
    + \Bigl[ \ni^{-2+} 
    + \ni^{-\frac32+} \delta^{\frac12 -} 
    + \ni^{-1} \delta^{1-}\Bigr] 
    \left( \| \i u \|_{X^{1,\frac12,1}_\delta}^4  
    + \| \i u \|_{X^{1,\frac12,1}_\delta}^2 
    \| n^\pm \|_{Y^{0,\frac12,1}_{\pm\delta}}^2 \right).
    \end{multline*} 
\end{prop}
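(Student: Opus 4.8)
The plan is the standard I-method scheme adapted to the Zakharov structure: differentiate $\t H$ along the flow \eqref{eq:ZS}, use the algebraic cancellation built into the correction multiplier $\sigma^\pm$ to reduce $\frac{d}{dt}\t H$ to a trilinear term carrying an extra $\ni^{-1+}$ of smoothing together with quadrilinear terms, then integrate over $[0,\delta]$ and bound the resulting space-time multilinear forms with Fourier-restriction-norm methods. Since $(u,n^\pm)$ is assumed smooth, all of these manipulations are legitimate and only the quantitative estimate is at issue.

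First I would compute $\frac{d}{dt}\t H$. Write $\t H = \t H_2 + \t H_3$, where $\t H_2 = \|\i u\|_{\dot H^1}^2 + \tfrac12\|n^\pm\|_{L^2}^2$ and $\t H_3$ is the cubic correction term. Substituting for $u_t,\bar u_t,n^\pm_t$ from \eqref{eq:ZS}, the free parts ($i\Delta u$ and $\mp Dn^\pm$) contribute nothing to $\frac{d}{dt}\t H_2$, since the free Schr\"odinger and wave flows preserve $|\hat u(k)|$ and $|\hat n^\pm(k)|$; what remains there is a cubic form in $(\hat u,\hat{\bar u},\hat n^\pm)$. Differentiating $\t H_3$ gives, from the free parts of \eqref{eq:ZS}, a cubic form whose symbol is $\sigma^\pm$ multiplied by a resonance function $\Phi^\pm(k_1,k_2,k_3)$ which on the Fourier support is dominated by the largest modulation, and, from the nonlinear parts, quadrilinear forms of two kinds — one purely Schr\"odinger (from $n^\pm\mapsto\pm D|u|^2$) and one of mixed $u\,\bar u\,n\,n$ type (from $u\mapsto\tfrac12(n^++n^-)u$). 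By the construction of $\sigma^\pm$ in \cite[Section 3]{Kish2} (see \cite[(3.2)--(3.4)]{Kish2}), the two cubic contributions cancel up to a trilinear remainder whose symbol gains a factor $\ni^{-1+}$; this is precisely the purpose of the correction. The result is $\frac{d}{dt}\t H = \mathcal R_3 + \mathcal R_4$, with $\mathcal R_3$ trilinear with symbol $O(\ni^{-1+})$ and $\mathcal R_4$ a sum of quadrilinear forms carrying extra smoothing inherited from the correction.

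Next I would integrate over $[0,\delta]$ and estimate the forms. For $\int_0^\delta\mathcal R_3\d t$, dualize and decompose dyadically in spatial frequency $N_j$ and modulation $L_j$; the structural input is that the largest modulation controls $|\Phi^\pm|$ on the Fourier support, which lets one shift the largest modulation weight onto the lowest frequency and then close via the periodic $L^4_{x,t}$ Strichartz estimate together with bilinear estimates in the $X^{s,b,1}$, $Y^{s,b,1}_\pm$ spaces from \cite{BHHT}; the gain $\delta^{\tfrac12-}$ comes from using the time localization to lower one modulation exponent from $\tfrac12$ to $0+$, and this produces the first term of the claimed bound. The quadrilinear pieces $\int_0^\delta\mathcal R_4\d t$ — schematically $\int_0^\delta\int_\T D(|u|^2)\,|u|^2$ and $\int_0^\delta\int_\T u\,\bar u\,n^\pm n^\pm$ — I would treat the same way, using the $L^4$ and $L^6$ Strichartz estimates and bilinear estimates on $\T$; the three coefficients $\ni^{-2+}$, $\ni^{-\tfrac32+}\delta^{\tfrac12-}$, $\ni^{-1}\delta^{1-}$ should emerge from distributing the available modulation regularity and time localization differently across the resonant and non-resonant frequency regimes, with the crudest estimate trading a power of $\ni$ for a full power of $\delta$. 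Collecting the contributions of these two steps gives the Proposition; the only place the dimension enters is here, and the sharp one-dimensional Strichartz and bilinear estimates are what make this bound slightly stronger than its $\T^2$ analogue \cite[Prop. 4.1]{Kish2}.

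The hard part will be the trilinear estimate for $\mathcal R_3$ in the range where $s$ is close to $\tfrac12$: there the $\ni^{-1+}$ smoothing is barely enough, and one must separate the high-high-to-low, high-low-to-high, and near-resonant frequency interactions and exploit the resonance identity together with a bilinear Strichartz estimate in each of them. The purely-Schr\"odinger quartic term $\int D(|u|^2)\cdot|u|^2$ and the resonant wave-wave interactions in $\mathcal R_4$ will also need care, although they should be less delicate than the cubic remainder.
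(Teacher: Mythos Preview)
Your outline is correct and follows the same underlying I-method scheme as the paper, which in turn is Kishimoto's argument \cite[Prop.~4.1]{Kish2}. The difference is one of economy: the paper does not redo the computation of $\frac{d}{dt}\t H$, the resonant decomposition, or the multilinear estimates from scratch. Instead it observes that Kishimoto's $\T^2$ proof carries over to $\T$ verbatim \emph{except} in Cases 1(ii) and 2(ii) of term (4.2) in \cite{Kish2}, which are exactly the cases that produce the coefficient you want to improve from $\ni^{-5/4+}\delta^{1/4-}$ to $\ni^{-3/2+}\delta^{1/2-}$. Those cases use an $L^4_{x,t}$ Strichartz estimate for the wave part, and the entire proof of the Proposition reduces to establishing the sharper one-dimensional version
\[
\|nm\|_{L^2_{x,t}} \lesssim L^{1/2} N^{1/2} \|n\|_{L^2_{x,t}}\|m\|_{L^2_{x,t}}
\]
for $n,m$ supported at frequency $\approx N$ and modulation $\approx L$, which gives $\|n\|_{L^4}\lesssim N^{1/4}\|n\|_{Y^{0,1/4,1}_\pm}$ in place of Kishimoto's $N^{3/8}\|n\|_{Y^{0,3/8,1}_\pm}$. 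Applied twice in a product, this yields the stated gain of $\ni^{-1/4}\delta^{1/4}$.

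So your plan would work, but you are proposing to redo all of \cite[Prop.~4.1]{Kish2}; the paper's point is that only this one bilinear $L^4$ lemma is new. In particular, your concern that ``the hard part will be the trilinear estimate for $\mathcal R_3$'' is misplaced here: that part is identical to Kishimoto's and needs no modification, and the quadrilinear piece is where the one-dimensional improvement actually enters.
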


This gives 
\begin{align*}
    \Bigl| \t{H}( u^j, n^{\pm j})(\delta j) - \t{H}( u^j, n^{\pm j})(\delta (j-1)) \Bigr| 
    \lesssim& 
    \ni^{-1+} \delta^{\frac12-} \nm^3  + 
    \Bigl(\ni^{-2+} + \ni^{-\frac32+} \delta^{\frac12-} + \ni^{-1+} \delta^{1-}\Bigr)N^4 \\
    \approx &
    \ni^{-1+} \nm^2  + 
    \ni^{-2+}\nm^4 + \ni^{-\frac32+}\nm^{3}.
\end{align*}

Finally, we use the definition of the modified energy to note that 
\begin{align*}
    \Bigl|  \t{H}(u^{j} + &v^{j}, n^{\pm j } + \t{m}^{\pm j }) (\delta j) -\t{H}(u^{j}, n^{\pm j } ) (\delta j) \Bigr| \\
    \begin{split}
    &\lesssim 
    \| \i v^j \|_{\dot{H}^1} \| \i ( u^j + v^j) \|_{\dot{H}^1} + \| \t{m}^{\pm j}\|_{L^2} \|  n^{\pm j} + \t{m}^{\pm j}\|_{L^2} \\
    &\quad + \| u^{j} + v^{j} \|_{L^2} \| u^{j} + v^{j}\|_{H^{\frac12+}} \| \t{m}^{\pm j} \|_{L^2} + \Bigl( \| u^{j} + v^{j}\|_{L^2} \| v \|_{H^{\frac12+}} + \| u\|_{L^2} \|v\|_{H^{\frac12+}} \Bigr) \| n^{\pm j} \|_{L^2}
    \end{split}\\
    &\lesssim 
    \ni^{1-s} \nhl^{s-\frac12 -\beta  } \nm. 
\end{align*}

Thus the total growth of the Hamiltonian over $[0, \delta J]$ is bounded by 
\begin{align*}
\ni^{-1+} \nm^3 
+ (J-1) \Bigl( 
    \ni^{-1+} \nm^2  + 
    \ni^{-2+}\nm^4 + \ni^{-\frac32+}\nm^{3} + 
    \ni^{1-s} \nhl^{s-\frac12 -\beta   }\nm \Bigr)  \\
    = \nm^2 \Bigl[
    \ni^{-1+} \nm + 
    (J-1)\Bigl( 
    \ni^{-1+}   + 
    \ni^{-2+}\nm^2 + \ni^{-\frac32+}\nm  + \ni^{1-s} \nhl^{s-\frac12 -\beta   } \nm^{-1} \Bigr) \Bigl].  
\end{align*}
This is acceptable as long as $\ni^{-1+} \nm \lesssim 1$ and 
\[ J \lesssim \min\left\{ \ni^{1-}  ,\;\; \ni^{2-} \nm^{-2},\;\; \ni^{\frac32 -} \nm^{-1},\;\; \ni^{-(1-s)} \nhl^{-(s-\frac12 -\beta   )} \nm \right\} .\]
For such $J$, we can iterate to cover an interval of length 
\[ \delta J \approx  \min\left\{ \ni^{1-} \nm^{-2-}  ,\;\;  \ni^{2-} \nm^{-4-}, \;\; \ni^{\frac32 -} \nm^{-3-}, \;\; \ni^{-(1-s)} \nhl^{-(s-\frac12 -\beta   )} \nm^{-1-} \right\}. \] 
Recalling that 
\[ N \approx  \max\{ K\ni^{1-s} , C_1 \nhl^{\frac12 -\beta   }\}, \]
we obtain the following bounds. To simplify the expressions, here the implicit constants here depend on $K$, $C_1$, $s$, and $\beta$ (but not $T$):  
\begin{align*}
 \ni^{1-} \nm^{-2-} \gtrsim T \quad &\Leftrightarrow \quad \ni\gtrsim T^{\frac{1}{2s-1}+} \text{\quad\&\quad} \ni \gtrsim  T^{1+} \nhl^{1 - 2\beta    +}, 
\\  \ni^{2-} \nm^{-4-} \gtrsim T \quad &\Leftrightarrow \quad \ni \gtrsim  T^{\frac{1}{4s-2}+} \text{\quad\&\quad} \ni \gtrsim  T^{\frac12+}  \nhl^{1 - 2\beta    +}, \\
 \ni^{\frac32 -} \nm^{-3-} \gtrsim T \quad &\Leftrightarrow \quad \ni \gtrsim  T^{\frac{2}{6s-3}+} \text{\quad\&\quad} \ni \gtrsim  T^{\frac23+}  \nhl^{1 - 2\beta    +}.
 \end{align*}
 The final term in the minimum yields the most complicated constraint: 
 \begin{align*}
 \ni^{-(1-s)} \nhl^{-(s-\frac12 -\beta   )} \nm^{-1-} \gtrsim T \quad &\Leftrightarrow \quad \ni \lesssim  T^{\frac{-1+}{2(1-s)}}\nhl^{\frac{\beta + \frac12 - s}{2(1-s)}-} \text{\quad\&\quad} \ni \lesssim  T^{\frac{-1}{1-s}}  \nhl^{\frac{2\beta - s }{1-s}-}. 
 \end{align*} 

To satisfy all these constraints simultaneously, we require 
\[ s > \frac12 \]
and take
\[ \ni = \nhl^\alpha \text{ for some } \alpha \in \Biggl( 1 - 2\beta   , \min\Bigl\{\frac{\beta + \frac12 - s }{2(1-s)}, \; \frac{2\beta - s   }{1-s}  \Bigr\} \Biggr)\] 
and some $\nhl$ very large, dependent on $K$, $T$, $s$, and $r$.
This is possible as long as 
\[ 1- 2\beta    <  \min\Bigl\{\frac{\beta + \frac12 - s }{2(1-s)}, \; \frac{2\beta - s   }{1-s}  \Bigr\}. \]
Solving this with $s > \frac12$, we find the constraint
\[ \beta > \frac12 - \min\Bigl\{\frac{1-s}{4(1-s) +1}, \; \frac{1-s}{2(1-s) +2} \Bigr\}    = \frac12 -  \frac{1-s}{2(1-s) +2}     = \frac{1}{2(2-s)}   . \]

To obtain a polynomial bound, note that we may take 
\[\nhl = C(K, C_1, s, \beta) T^\gamma, \]
where 
\[ \gamma = \max\Bigr\{ \frac{1}{(2s-1)\alpha}, \frac{1}{\beta + \frac12 - s - 2\alpha(1-s)}, \frac1{\alpha + 2\beta -1   },  \frac{1}{2\beta - s    - \alpha(1-s)} \Bigl\}+.  \]
Therefore we can take
\[ N \approx \biggl( T^{\alpha(1-s) \gamma} + T^{\bigl(\frac12-\beta   \bigr) \gamma} \biggr). \] 
This allows us to conclude that the nonlinear part of the Zakharov flow grows at most polynomially in the $H^s \times L^2$ norm. 

\end{proof}

\section{Local Theory Result}

The section contains the proof of the required local theory for the difference equation. The statement is as follows.

\begin{prop} \label{localtheory}
Fix $\delta \ll 1$, $\epsilon_0 \ll 1$, $C_0 > 0$, and input functions $(u,n)$ such that 
\begin{equation*}
\begin{cases}
 \delta^{\frac12-\epsilon_0}\|u\|_{X^{s,\frac12,1}_{\delta}} &\leq C_0 \\
\delta^{\frac12-\epsilon_0}\| n\|_{Y^{0,\frac12,1}_{\delta}} &\leq C_0. 
\end{cases}
\end{equation*}
Consider the difference equation on $[0,\delta]$ given by
\begin{equation}\label{eq:diff_eq}
\begin{cases}
iv_t + \Delta v = \Re(n + m)(u + v) - \Re(n)u \\
im_t - Dm = D[ |u + v|^2 - |u|^2 ] 
\end{cases}
\end{equation}
with initial data
\begin{align*}
v(x,0)  &= 0 \\
m(x,0) &= m_0 := \W(t_0) \Biggl( \sum_{|k| \geq \nhl} \frac{h_k}{\lb k \rb^\beta} e^{ikx}\Biggr).
\end{align*}
We assume that the coefficients $h_k$ satisfy
\[ \sup_k |h_k| \leq C_1 \quad \text{for some} \quad C_1 >0. \] 

We further assume that $s - \frac12- \beta     < 0$. Then for $\nhl \gg 1$ sufficiently large and $\delta$ sufficiently small, the difference equation \eqref{eq:diff_eq}
has a solution in $H^s \times H^{\beta - \frac12  -}$.  Furthermore, if we write 
\begin{align*}
m(x,t) &= \W(t) m_0(x) + \t{m}(x,t),
\end{align*}
then we have for $t \in [0,\delta]$
\begin{equation*}
\| v \|_{H_x^s} + \| \t{m}\|_{ L^2_x} \lesssim \nhl^{s-\beta - \frac12   } \ll 1. 
\end{equation*}
\end{prop}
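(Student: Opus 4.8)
The plan is to solve \eqref{eq:diff_eq} by a contraction mapping argument in the Besov-type restriction space $X^{s,\frac12,1}_\delta \times Y^{0,\frac12,1}_\delta$, on a ball of radius comparable to $\nhl^{s-\beta-\frac12}$. Since $v(x,0)=0$ and, writing $m=\W(t)m_0+\t m$, the remainder $\t m$ also has zero initial data, both unknowns are given by pure Duhamel terms. Expanding the nonlinearities and using $n+m=n+\W(t)m_0+\t m$,
\[ \Re(n+m)(u+v)-\Re(n)u=\Re(\W(t)m_0)\,u+\Re(\t m)\,u+\Re\bigl(n+\W(t)m_0+\t m\bigr)\,v, \]
\[ D\bigl[|u+v|^2-|u|^2\bigr]=D\bigl[2\Re(\bar u v)+|v|^2\bigr], \]
so the only term containing none of the unknowns $(v,\t m)$ is the source term $\Re(\W(t)m_0)\,u$; every other term is linear or quadratic in $(v,\t m)$. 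The strategy is to show that the source term already has size $\lesssim\nhl^{s-\beta-\frac12}$ in the relevant dual space, while every remaining term carries either a small coefficient $\lesssim\delta^{\theta}$ for some $\theta>0$ --- which we exploit using the hypotheses $\delta^{\frac12-\epsilon_0}\|u\|_{X^{s,\frac12,1}_\delta}\le C_0$ and $\delta^{\frac12-\epsilon_0}\|n\|_{Y^{0,\frac12,1}_\delta}\le C_0$ together with the freedom to take $\delta$ small --- or an extra factor of the small ball radius.

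For every term whose wave factor is $n$ or $\t m$, i.e. a function at $L^2$ regularity, the required bounds are exactly the standard Zakharov restriction-norm estimates of Takaoka \cite{Taka} and Bejenaru--Herr--Holmer--Tataru \cite{BHHT} (in the form used by Kishimoto \cite{Kish2}), which hold precisely because $s>\frac12$: one has the Duhamel estimates $\bigl\|\int_0^t e^{i(t-t')\Delta}F\,dt'\bigr\|_{X^{s,\frac12,1}_\delta}\lesssim\delta^{\theta}\|F\|_{X^{s,-\frac12+,1}_\delta}$ and its wave analogue, together with
\[ \|\Re(w)\,\psi\|_{X^{s,-\frac12+,1}_\delta}\lesssim\delta^{\theta}\,\|w\|_{Y^{0,\frac12,1}_\delta}\,\|\psi\|_{X^{s,\frac12,1}_\delta},\qquad \bigl\|D(\bar\psi_1\psi_2)\bigr\|_{Y^{0,-\frac12+,1}_\delta}\lesssim\delta^{\theta}\,\|\psi_1\|_{X^{s,\frac12,1}_\delta}\|\psi_2\|_{X^{s,\frac12,1}_\delta}. \]
These control $\Re(n)v$, $\Re(\t m)(u+v)$, and $D[2\Re(\bar u v)+|v|^2]$, each with a positive power of $\delta$ to spare; in particular the wave remainder obeys $\|\t m\|_{Y^{0,\frac12,1}_\delta}\lesssim\delta^{\theta}(C_0+R)R$, which is $\lesssim R:=\nhl^{s-\beta-\frac12}$ once $\|v\|_{X^{s,\frac12,1}_\delta}$ is of that size.

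The heart of the matter is the single new estimate
\[ \bigl\|\Re(\W(t)m_0)\,\psi\bigr\|_{X^{s,-\frac12+,1}_\delta}\lesssim\delta^{\theta}\,C_1\,\nhl^{s-\beta-\frac12}\,\|\psi\|_{X^{s,\frac12,1}_\delta}, \]
applied with $\psi=u$ (the source term) and $\psi=v$. Here $\W(t)m_0=\W(t+t_0)\bigl(\sum_{|k|\ge\nhl}\frac{h_k}{\lb k\rb^{\beta}}e^{ikx}\bigr)$ is a linear wave solution, so its space--time Fourier transform is supported on $\{\tau=\mp|k|\}$ --- wave modulation $\approx 1$ --- with frequencies $\gtrsim\nhl$ and coefficients bounded by $C_1\lb k\rb^{-\beta}$. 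I would prove it by dyadically decomposing all factors and organizing the sum by the wave frequency $N_w\gtrsim\nhl$, the (unrestricted) Schr\"odinger input frequency $N_\psi$, and the output modulation. Since the wave coefficients are only $\ell^\infty$, the wave factor should be placed in $L^2_{x,t}$, where on a block $N_w$ its norm is $\lesssim\delta^{\frac12}C_1 N_w^{\frac12-\beta}$ --- the best possible --- while the Schr\"odinger factor is placed in $L^\infty_{x,t}$, paying by Bernstein the modulation weight it already carries. The key gain comes from the wave--Schr\"odinger resonance identity, which forces $\max\bigl\{\lb\tau_{\mathrm{out}}-k_{\mathrm{out}}^2\rb,\ \lb\tau_\psi-k_\psi^2\rb\bigr\}\gtrsim N_w^2$ in the interactions where $N_w$ contributes to the output frequency; the factor $\lb L\rb^{-\frac12+}$ from the output (dual) norm then supplies the missing power of $N_w^{-1}$ relative to the crude count, and summing the resulting geometric series over $N_w\ge\nhl$ and over $N_\psi$ produces the stated bound. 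Carrying out this case analysis cleanly, and checking that all of the frequency and modulation sums converge for $(s,\beta)$ in our range, is the step I expect to be the main obstacle; it is also where the quantitative limitation on how rough the wave data may be enters (this is the standalone local theory with wave data in $H^{-\sigma}$ mentioned after Theorem \ref{mainThm}).

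Finally, collecting the bounds above shows that the Duhamel map sends the ball $\bigl\{\|v\|_{X^{s,\frac12,1}_\delta}+\|\t m\|_{Y^{0,\frac12,1}_\delta}\le C\,\nhl^{s-\beta-\frac12}\bigr\}$ into itself and is a contraction there, provided $\delta$ is small enough in terms of $C_0$ and $C_1$ and $\nhl$ is large; the unique fixed point is the desired solution of \eqref{eq:diff_eq}. The embeddings $X^{s,\frac12,1}_\delta\hookrightarrow C([0,\delta];H^s)$ and $Y^{0,\frac12,1}_\delta\hookrightarrow C([0,\delta];L^2)$ then upgrade the bounds to $\|v(t)\|_{H^s_x}+\|\t m(t)\|_{L^2_x}\lesssim\nhl^{s-\beta-\frac12}\ll 1$ for $t\in[0,\delta]$, using $s-\beta-\frac12<0$. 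Since $\W(t)m_0\in H^{\beta-\frac12-}$ --- indeed $\|\W(t)m_0\|_{H^\rho}\lesssim C_1\nhl^{\rho-\beta+\frac12}$ for $\rho<\beta-\frac12$ --- and $\t m\in L^2\hookrightarrow H^{\beta-\frac12-}$ because $\beta\le\frac12$, the full wave part $m=\W(t)m_0+\t m$ lies in $H^{\beta-\frac12-}$, which completes the proof.
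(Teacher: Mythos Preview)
Your overall framework matches the paper's exactly: set up a contraction in $X^{s,\frac12,1}_\delta \times Y^{0,\frac12,1}_\delta$ on a ball of radius $\approx \nhl^{s-\beta-\frac12}$, dispose of every bilinear term except $\Re(\W(t)m_0)\psi$ via the known Kishimoto estimates, and isolate the single new inequality. The paper proceeds precisely this way.

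The gap is in your heuristic for that key estimate. The Schr\"odinger--wave resonance function here is $|k_0|\,|k_0+2k_2\pm\lambda(k_0)|$, which \emph{vanishes} on the integer set $k_0=-2k_2\mp 1$; on that set all three modulations can be small simultaneously and your claimed gain $\max\{L_{\mathrm{out}},L_\psi\}\gtrsim N_w^2$ is simply false. The paper treats this exact resonant set by hand: for each output frequency $k_1$ there is a unique resonant pair $(k_0,k_2)$, the convolution collapses to a single term, and the $\ell^\infty$ hypothesis $|h_{k_0}|\lb k_0\rb^{-\beta}\le C_1\lb k_1\rb^{-\beta}$ is used pointwise, yielding $C_1\delta^{\frac12-}\nhl^{-\beta}\|u\|_{X^{s,\frac12,1}}$. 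Your plan to always place the wave factor in $L^2_{x,t}$, which costs $N_w^{\frac12}$, would lose exactly this case. Even off the exact resonant set your modulation claim is too optimistic: in the near-resonant regime $N_1\approx N_2$, $\overline{L}\ll N_1^2$ one only has $\overline{L}\gtrsim N_0$, and the paper has to invoke Kishimoto's refined bilinear estimate (his Proposition~3.5) to close. Finally, after the time cutoff $\eta_\delta$ the wave modulation is not concentrated at $L_0\approx 1$; the paper tracks the full $L_0$ dependence via $\|Q_{N_0,L_0}(\eta_\delta\W m_0)\|_{L^2}\lesssim C_1\delta N_0^{\frac12-\beta}L_0^{\frac12}\lb\delta L_0\rb^{-a}$ and chooses $a$ case by case. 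So your plan is sound, but the execution requires a separate resonant/near-resonant analysis that your heuristic does not anticipate.
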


The proof of this amounts to proving a contraction for the difference equation
\begin{equation}
\begin{cases}
i{v} + \Delta v = \Re(n + \t{m} + \W(t)m_0)(u + v ) - \Re(n) u, \\
i\t{m}_t - D\t{m} = D[|u + v |^2 - |u|^2] 
\end{cases}
\end{equation}
with zero initial data on the interval $[0,\delta]$ in a ball of radius $\approx \nhl^{s -\frac12 -\beta   }$ in the space $X^{s,\frac12, 1}_{\delta} \times Y^{0,\frac12, 1}_{\delta}$. 

The only problematic term to estimate is 
\[ \Re(\W(t) m_0)(u + v). \]
All others are covered by existing bilinear estimates due to Kishimoto \cite{Kish1} as long as we take $\delta$ sufficiently small. Furthermore, since $u$ typically has much larger norm that $v$, it suffices to consider 
\[ \Re(\W(t) m_0)u. \]
To close the contraction, it suffices to obtain a bound of the form 
\[\left\|  \eta_\delta(t) \int_0^t e^{i(t-t')\Delta}\Bigl(\Re(\W(t) m_0)u \Bigr) \d t' \right\|_{X^{s,\frac12,1}_\delta} \lesssim C_1 \delta^{\frac12-}  \nhl^{s-\frac12-\beta  } \| u\|_{X^{s,\frac12,1}_\delta} \lesssim C_0 C_1 \delta^{\epsilon_0 -} \nhl^{s- \frac12 -\beta   }.  \]  
The remainder of the paper is devoted to obtaining this bound, relying heavily on estimates previously established by Kishimoto. 

The requirement that $\delta$ be small exists because we will require
\begin{align*}
 C_0 \delta^{\epsilon_0 -} \lesssim 1 \\
 C_0 C_1 \delta^{\epsilon_0-} \lesssim 1
\end{align*}
to close the contraction.

In the following, we employ dyadic decompositions. We always use $P_{N_1,L_1}  \Re(\W^\pm(t) m_0^\pm)u$, $P_{N_2,L_2}u$, and $Q^\pm_{N_0,L_0} \W^\pm m_0^\pm$; i.e. the product $\Re(\W^\pm(t) m_0^\pm)u$ is associated with the dyadic variables $N_1$ and $L_1$, the linear wave flow is associated the dyadic variables $N_0$ and $L_0$, etc.

We also use bars to denote maxima and minima of the dyadic variables, i.e.
\[ \overline{L}_{jk} := \max\{ L_j, L_k\}, \qquad \overline{L} := \overline{L}_{012} = \max\{ L_0, L_1, L_2\},\]
\[ \underline{L}_{jk} = \min\{ L_j, L_k\}, \qquad \underline{L} = \underline{L}_{012} = \min\{ L_0, L_1, L_2\}.\]
We also define
\[ L_m := \operatorname{median}\{ L_0, L_1, L_2\}.\] 

\subsection{Resonant case}

Here we confine our attention to the resonant frequencies. These occur when 
\[ 0= |L_1-L_2-L_0| \lesssim \overline{L}.\] 
We have 
\[ |L_1-L_2-L_0| =|\tau_0 + \tau_2 - (k_0 + k_2)^2 - \tau_2 + k_2^2 - \tau_0 \mp |k_0||  = |k_0| | k_0 + 2k_2 \pm \lambda(k_0)|,   \]
where $\lambda(k)$ is the sign of $k$. 
Since $k_0 \neq 0$, solving this yields
\[ k_0 = 2k_1 \pm \lambda(k_1) \qquad \qquad k_2 = \mp\lambda(k_1) - k_1, \]
for $k_1 \neq 0.$ 
 
On these frequencies, $\W^\pm(t) m_0^\pm u$ can be estimated in $X^{s,-\frac12,1}_\delta$ as follows. We use the fact that $|h_k^\pm| \leq C_1 $, and compute 
\begin{align*}
&\| \W^\pm(t) m_0^\pm u \|_{X^{s,-\frac12, 1}_{\delta}} \\ 
\lesssim
&\left\| N_1^s L_1^{-\frac12} \left\| \int  \frac{h^\pm_{2k_1 - \lambda(k_1)}}{\lb k_1 \rb^\beta } \hat{u}(\mp\lambda(k_1) - k_1, \tau_2) \delta \hat{\eta}(\delta(\tau_1 - \tau_2 \pm | 2k_1 - \lambda(k_1)|)) \d \tau_2  \right\|_{\ell^2_{k_1}(|k_1| \approx N_1)L^2_{\tau_1}(|\tau_1 - k_1^2| \approx L_1) }\right\|_{\ell^2_{N_1}\ell^1_{L_1}} \\
\lesssim 
& C_1\delta^{\frac12- } \Bigg\| N_1^{-\beta   } L_1^{0-} \Bigg\| \int  \lb \tau_1 - k_1^2\rb^{-\frac12+} \frac{\lb \tau_2 - (\mp \lambda(k_1) -k_1)^2 \rb^{-\frac12}}{\lb \tau_1 -\tau_2 \pm | 2k_1 \pm \lambda(k_1)| \rb^{\frac12+}} \times \\
 & \hspace{2.5in} \left( \sum_{L_2}N_1^s L_2^\frac12 \hat{P_{N_1,L_2}u}(\mp \lambda(k_1) - k_1, \tau_2)  \right) \d \tau_2 \Bigg\|_{\ell^2_{k_1}(|k_1| \approx N_1)L^2_{\tau_1}(|\tau_1 - k_1^2| \approx L_1) } \Bigg\|_{ \ell^2_{N_1} \ell^1_{L_1}} \\
\lesssim 
& C_1\delta^{\frac12- } \left\| N_1^{-\beta   } L_1^{0-} \left\|  \lb \tau_1 - k_1^2\rb^{-1+} \left\| \sum_{L_2}N_1^s L_2^\frac12 \hat{P_{N_1,L_2}u}(\mp\lambda(k_1) - k_1, \tau_2)  \right\|_{L^2_{\tau_2}} \right\|_{\ell^2_{k_1}(|k_1| \approx N_1)L^2_{\tau_1}(|\tau_1 - k_1^2| \approx L_1) } \right\|_{ \ell^2_{N_1} \ell^1_{L_1}} \\
\lesssim 
& C_1\delta^{\frac12- } \left\| N_1^{-\beta   } L_1^{0-} \left\|  \left\| \sum_{L_2}N_1^s L_2^\frac12 \hat{P_{N_1,L_2}u}(\mp\lambda(k_1) - k_1, \tau_2)  \right\|_{L^2_{\tau_2}} \right\|_{\ell^2_{k_1}(|k_1| \approx N_1 } \right\|_{ \ell^2_{N_1} \ell^1_{L_1}} \\
\lesssim 
& C_1\delta^{\frac12- } \left\| N_1^{-\beta   } L_1^{0-}  \sum_{L_2}N_1^s L_2^\frac12 \left\| P_{N_1,L_2}u \right\|_{L^2L^2}   \right\|_{ \ell^2_{N_1} \ell^1_{L_1}} \\
\lesssim 
& C_1\delta^{\frac12- } \left\| N_1^{-\beta   }  \sum_{L_2}N_1^s L_2^\frac12 \left\| P_{N_1,L_2}u \right\|_{L^2L^2}   \right\|_{ \ell^2_{N_1}} \\
\lesssim 
& C_1\delta^{\frac12- } \nhl^{-\beta   } \| u \|_{X^{s,\frac12,1}_I} \lesssim C_0 C_1 \delta^{\epsilon_0-} \nhl^{-\beta   }.
\end{align*}

\subsection{High Schr\"odinger frequencies}
We decompose dyadically in frequency space as follows. For general $m$,
\begin{align*}
&\| \eta_\delta \int_0^t S(t - t')(mu) d t' \|_{X^{s,\frac12, 1}_I} \\
&\approx
\left[ \sum_{N_1} \| P_{N_1} \eta_\delta \int_0^t S(t - t')(mu) d t' \|^2_{X^{s,\frac12,1}_I} \right]^\frac12 \\
&\lesssim 
\left[ \sum_{N_1} \left( \sum_{N_0, N_2} \sum_{L_0, L_1, L_2} \left\| \eta_\delta \int_0^t S(t-t') P_{N_1, L_1}[P_{N_2,L_2}(\eta_\delta u) Q_{N_0,L_0}(\eta_\delta m) ] d t' \right\|_{X^{s,\frac12,1}_I} \right)^2 \right]^\frac12.
\end{align*}

Furthermore, we calculate that 
\begin{align*}
\| Q_{N_0,L_0}( \eta_\delta \W(t) m_0) \|_{L^2L^2}  &= 
\left\| \frac{h_k}{\lb k \rb^\beta}  \delta \hat{\eta}(\delta(\tau \pm |k|)) \right\| _{L^2_\tau \ell^2_k(|k| \approx N_0, |\tau \pm |k| \approx L_0)}\\
&= 
\delta \left\| \frac{h_k}{\lb k \rb^\beta}  \right\|_{\ell^2(|k| \approx N_0)} \left\| \hat{\eta}(\delta \tau) \right\|_{L^2(|\tau| \approx L_0)} \\
&\lesssim
C_1 \delta \left( N_0^{\frac12 - \beta   } \right) L_0^\frac12 \lb \delta L_0 \rb^{-a}  = C_1 \delta N_0^{\frac12 - \beta   }  L_0^\frac12 \lb \delta L_0 \rb^{-a}.
\end{align*}

\subsubsection{$N_0 \approx N_2 \gg N_1$ and $\overline{L_{02}} \gtrsim N_0^2$}
 Using Kishimoto's Lemma 4.1 and Corollary 3.3, 
\begin{align*}
 &\left[ \sum_{N_1} \left( \sum_{N_0, N_2} \sum_{L_0, L_1, L_2} \left\| \eta_\delta \int_0^t S(t-t') P_{N_1, L_1}[P_{N_2,L_2}(\eta_\delta u) Q_{N_0,L_0}(\eta_\delta w) ] d t' \right\|_{X^{s,\frac12,1}_I} \right)^2 \right]^\frac12 \\
 &\lesssim C_1
 \left[ \sum_{N_1} \left( \sum_{N_0, N_2} \sum_{L_0, L_1, L_2} \delta^{\frac 32 -b } N_1^s L_1^{-b} \overline{L}^\frac12 L_m^{\frac14+} \underline{L}^{\frac14 +} N_1^{\frac12-} N_2^{-1} N_0^{\frac12 - \beta   }  L_0^\frac12 \lb \delta L_0 \rb^{-a} \|P_{N_2,L_2}(\eta_\delta u)\|_{L^2L^2}  \right)^2 \right]^\frac12 \\
 &\lesssim C_1
 \left[ \sum_{N_1} \left( \sum_{N_2} \sum_{L_0, L_1, L_2} \delta^{\frac 32 -a -b } N_1^{s+ \frac12 -} L_1^{-b} \overline{L}^\frac12 L_m^{\frac14+} \underline{L}^{\frac14 +} N_2^{-\frac12 - \beta   }  L_0^{\frac12 -a}  \|P_{N_2,L_2}(\eta_\delta u)\|_{L^2L^2}  \right)^2 \right]^\frac12.
\end{align*}
If $L_2 \geq L_0$, then $\overline{L} \lesssim L_2$, so we have the bound
\[ C_1
\left[ \sum_{N_1} \left( \sum_{N_2} \sum_{L_0, L_1, L_2} \delta^{\frac 32 -a -b }  L_0^{\frac34 -a +} L_1^{\frac14 -b+} N_1^{s + \frac12 -}N_2^{-s - \frac12 - \beta   }   \left( N_2^s L_2^\frac12 \|P_{N_2,L_2}(\eta_\delta u)\|_{L^2L^2} \right) \right)^2 \right]^\frac12.
\]
Take $a = \frac34 +$ and $b = \frac14 +$ to obtain
\begin{align*} 
C_1&\left[ \sum_{N_1} \left( \sum_{N_2} \sum_{ L_2} \delta^{\frac12 -} N_1^{s +\frac12 -}N_2^{-s- \frac12 - \beta   }   \left( N_2^s L_2^\frac12 \|P_{N_2,L_2}(\eta_\delta u)\|_{L^2L^2} \right) \right)^2 \right]^\frac12 \\
\lesssim
C_1 &\delta^{\frac12- } \| u\|_{X^{s,\frac12,1}} \left[\sum_{N_2 \gg N_1} N_1^{2s + 1 -}N_2^{-2s - 1- 2\beta } \right]^\frac12 \\
\lesssim 
C_1 &\delta^{\frac12- } \nhl^{-\beta   } \| u\|_{X^{s,\frac12,1}} \lesssim C_0C_1 \delta^{\epsilon_0-} \nhl^{-\beta   }.
\end{align*}

If $L_2 \lesssim L_0$, then $\overline{L} \lesssim L_0$ and we have the bound
\[  C_1 \left[ \sum_{N_1} \left( \sum_{N_2} \sum_{L_0, L_1, L_2} \delta^{\frac 32 -a -b } L_0^{1-a} L_1^{\frac14 - b +}  N_1^{s +\frac12 -}  N_2^{-s -\frac12 - \beta   }  \left( N_2^s L_2^{\frac14+} \|P_{N_2,L_2}(\eta_\delta u)\|_{L^2L^2} \right) \right)^2 \right]^\frac12.\] 
Take $a = 1+$ and $b = \frac14+$ to obtain
\begin{align*}
C_1 \delta^{\frac14- } \nhl^{-\beta  } \| u\|_{X^{s,\frac14+,1}} \lesssim \delta^{\frac12- } \nhl^{-\beta    } \| u\|_{X^{s,\frac12,1}} \lesssim C_0C_1 \delta^{\epsilon_0-} \nhl^{-\beta   }.
\end{align*}

\subsubsection{$N_0 \approx N_2 \gg N_1$ and $\overline{L_{02}} \ll N_0^2$}

Using the same results as in the previous case, and noting that $L_1 = \overline{L} \approx N_0^2$, we have the bound

\begin{align*}
&C_1\left[ \sum_{N_1} \left( \sum_{N_2} \sum_{L_0, L_1, L_2} \delta^{\frac 32- a-b } L_0^{\frac34 - a +} L_1^{\frac12 -b}   N_1^{s +\frac12 - }N_2^{-s-\frac12 - \beta   }   \left( N_2^s L_2^{\frac14+} \|P_{N_2,L_2}(\eta_\delta u)\|_{L^2L^2} \right) \right)^2 \right]^\frac12.
\end{align*}
Take $b = \frac12$ and $a = \frac34 +$ and proceed as above to obtain the bound
\begin{align*}
C_1 \delta^{\frac14- } \nhl^{-\beta  } \| u\|_{X^{s,\frac14+,1}} \lesssim C_1 \delta^{\frac12- } \nhl^{-\beta   } \| u\|_{X^{s,\frac12,1}} \lesssim C_0C_1 \delta^{\epsilon_0 -} \nhl^{-\beta   } .
\end{align*}

\subsubsection{$N_1 \approx N_2$, $\overline{L_{02}} \ll N_1^2$, and $L_1 \gtrsim N_1^2$}

In this case, we in fact have $\overline{L} = L_1 \approx N_1^2$. Also note that $N_0 \lesssim N_0, N_1$. We use Kishimoto's Proposition 3.1. This gives the bound
\begin{align*}
&  C_1 \sum_{N_0, N_2} \sum_{L_0, L_1, L_2} \delta^{\frac 32- a-b } L_0^{\frac34 - a } L_1^{\frac12 -b}   N_0^{1 - \beta    } N_2^{-1 }   \left( N_2^s L_2^{\frac14} \|P_{N_2,L_2}(\eta_\delta u)\|_{L^2L^2} \right).
\end{align*}
Here we take $a = \frac34 +$ and $b = \frac12$ and again obtain
\[C_1 \delta^{\frac12- } \nhl^{-\beta   } \| u\|_{X^{s,\frac12,1}} \lesssim 
C_0C_1 \delta^{\epsilon_0 -} \nhl^{-\beta   }.\]

\subsubsection{$N_1 \approx N_2$ and $\overline{L_{02}} \gtrsim N_1^2$}

Here $N_0 \lesssim N_1$ and $L_1 \lesssim L_{m}$. We use Kishimoto's Prop. 3.1 again.

If $L_2 \geq L_0$, then we arrive at
\begin{align*}
& C_1 \sum_{N_0, N_2} \sum_{L_0, L_1, L_2} \delta^{\frac 32- a-b } L_0^{\frac34 - a } L_1^{\frac14 -b}   N_0^{1 - \beta   } N_2^{-1 }   \left( N_2^s L_2^{\frac12} \|P_{N_2,L_2}(\eta_\delta u)\|_{L^2L^2} \right).
\end{align*}
This closes by taking $a = \frac34 +$ and $b = \frac14 +$.
Otherwise $L_0 \geq L_2$ and we arrive at 
\begin{align*}
& C_1 \sum_{N_0,N_2} \sum_{L_0, L_1, L_2} \delta^{\frac 32- a-b } L_0^{1 - a } L_1^{\frac14 -b}   N_0^{1 - \beta    } N_2^{-1 }   \left( N_2^s L_2^{\frac14} \|P_{N_2,L_2}(\eta_\delta u)\|_{L^2L^2} \right) .
\end{align*}
This closes by taking $a = 1 +$ and $b= \frac14 +$. 

\subsubsection{$N_1 \approx N_2$ and $\overline{L} \ll N_1^2$.}

Here, since the resonant case has already been addressed, we also have $\overline{L} \gtrsim N_1$, and hence we can apply the proof of Kishimoto's Prop. 3.5. 
If $L_0 = \overline{L}$ with $\overline{L_{12}} \gtrsim N_0$, or if $\overline{L} = L_1$ or $L_2$, with $L_{m} \gtrsim N_1$ or $L_{m} \ll N_1  \lesssim \overline{L}/N_0$, Kishimoto's proof translates directly to the one-dimensional case, and we get
\begin{align*}
&C_1 \sum_{N_0, N_2} \sum_{L_0, L_1, L_2} \delta^{\frac 32- a-b } L_0^{\frac12 - a } L_1^{ -b} L_2^{-c} \underline{L}^\frac14 L_m^{\frac38} \overline{L}^{\frac38}    N_0^{ - \beta   }   \left( N_2^s L_2^c \|P_{N_2,L_2}(\eta_\delta u)\|_{L^2L^2} \right).
\end{align*}
To close this, we need $a + b + c = \frac32+$, and each dyadic sum in $L_i$ to converge. Note that $N_2 \lesssim \overline{L}$, so the sum in $N_2$ contributes $\overline{L}^{0}+$. 

If $\underline{L} = L_0$, then take $a = \frac34 +$, and $b=c=\frac38+$. 

If $\underline{L} = L_1$, we can take $a = \frac78+$, $b = \frac14+$, and $c = \frac38+$. 

If $\underline{L} = L_2$, then take $a = \frac78+$, $b = \frac38+$, and $c = \frac14+$. 

These lead to the bound
\[ C_0C_1 \delta^{\epsilon_0 -} \nhl^{-\beta    } \] 
as desired. 

Otherwise, if $\overline{L} = L_0$ and $L_m \lesssim N_0$, we note from Kishimoto's proof that for fixed $k_2$, the frquency $k_1$ is confined to an interval of length $\lesssim \overline{L}/N_1$. This leads to the bound
\begin{align*}
&\sum_{N_0, N_2} \sum_{L_0, L_1, L_2} \delta^{\frac 32- a-b } L_0^{\frac12 - a } L_1^{ -b} L_2^{-c} \underline{L}^\frac12   \overline{L}^\frac12    N_0^{\frac12  - \beta   }  N_2^{-\frac12} \left( N_2^s L_2^c \|P_{N_2,L_2}(\eta_\delta u)\|_{L^2L^2} \right).
\end{align*}
This can be bounded by $\delta^{\frac12-} \nhl^{-\beta   } \| u\|_{X^{s,\frac12,1}_\delta}$ by taking $a = 1+$, $b = \frac12+$, and $c= 0+$.  

The final possibility is $\overline{L} = L_1$ or $L_2$ and $L_m \ll N_1$ with $\overline{L} \ll N_0 N_1$. Exactly the same argument can be used to treat this case.

\subsection{Low Schr\"odinger Frequencies}

Here we assume that $N_2 \ll N_0 \approx N_1$. Noting that $\overline{L} \gtrsim N_0^2$, we can use Kishimoto's Prop. 3.1, which carries through for dimension one, to obtain
\begin{align*}
&\sum_{N_0, N_2} \sum_{L_0, L_1, L_2} \delta^{\frac32- a-b  } L_0^{\frac12 - a } L_1^{ -b} L_2^{-c} \underline{L}^\frac14 L_m^\frac14 \overline{L}^{\frac12}  N_0^{s - \frac12 - \beta   } N_2^{\frac12-s}   \left( N_2^s L_2^c \|P_{N_2,L_2}(\eta_\delta u)\|_{L^2L^2} \right).
\end{align*}
 
This closes by taking $a + b + c = \frac32 +$ as in the previous case and yields the desired bound 
\[ \delta^{\frac12-} \nhl^{s- \frac12 - \beta   } \| u\|_{X^{s,\frac12,1}_\delta} \lesssim C_0C_1 \delta^{\epsilon_0 -} \nhl^{s- \frac12 - \beta   }. \]

\section{Proof of Proposition \ref{growth_bound}} \label{prf_of_growth_bound}

The proof of this proposition is almost identical to that of \cite[Prop. 4.1]{Kish2}. Note that the only difference between our result and that of Kishimoto is that we replace an factor of $\ni^{-\frac54 +} \delta^{\frac14 -}$ in the estimate by a factor of $\ni^{-\frac32 +}\delta^{\frac12-}$.

We can thus repeat the proof of \cite[Prop. 4.1]{Kish2} verbatim, except for the estimate of term (4.2), Cases 1(ii) and 2(ii). These are the cases which lead to the factor $\ni^{-\frac54 +} \delta^{\frac14 -}$. In those cases, an $L^4_{x,t}$ Strichartz estimate \cite[Lemma 2.11]{Kish2} was used to obtain the bounds. To prove our result, it suffices to obtain a one-dimensional analogue of this Strichartz estimate. Specifically, we wish to show the following.
\begin{prop}
 For $u$, $v \in L^2(\T \times \R)$, we have \[ \| nm \|_{L^2_{x,t}} \lesssim L^\frac12 N^\frac12 \| n \|_{L^2_{x,t} }\|m\|_{L^2_{x,t}},\]
 where for some fixed $N$ and $L$ dyadic,
 \[ \operatorname{supp}\t{n}(k ,\tau), \; \operatorname{supp}\t{m}(k ,\tau) \subseteq \{ |k| \approx N\} \cap \{ |\tau \pm |k|| \approx L\}. \] 
\end{prop}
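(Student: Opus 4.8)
The plan is to reduce the bound to an elementary measure count of the output frequency region, in the standard way for bilinear $L^2$ estimates. First I would pass to the space--time Fourier side: by Plancherel in both $x$ and $t$,
\[ \| nm \|_{L^2_{x,t}} = \| \t n * \t m \|_{\ell^2_k L^2_\tau}, \qquad (\t n * \t m)(k,\tau) = \sum_{k_1 \in \Z} \int_\R \t n(k_1,\tau_1)\, \t m(k-k_1,\tau-\tau_1) \d \tau_1 . \]
For fixed $(k,\tau)$ the integrand vanishes unless $(k_1,\tau_1)$ lies in
\[ E(k,\tau) := \bigl\{ (k_1,\tau_1) \in \Z \times \R : |k_1| \approx N,\ |k-k_1| \approx N,\ |\tau_1 \pm |k_1|| \approx L,\ |(\tau-\tau_1) \pm |k-k_1|| \approx L \bigr\}, \]
so Cauchy--Schwarz in the $(k_1,\tau_1)$ variable (counting measure in $k_1$, Lebesgue measure in $\tau_1$) gives the pointwise bound
\[ |(\t n * \t m)(k,\tau)|^2 \leq |E(k,\tau)| \cdot \bigl( |\t n|^2 * |\t m|^2 \bigr)(k,\tau) . \]
Summing in $k$ and integrating in $\tau$, and applying Young's inequality together with Plancherel once more,
\[ \| nm \|_{L^2_{x,t}}^2 \leq \Bigl( \sup_{k,\tau} |E(k,\tau)| \Bigr) \, \bigl\| \, |\t n|^2 * |\t m|^2 \, \bigr\|_{\ell^1_k L^1_\tau} = \Bigl( \sup_{k,\tau} |E(k,\tau)| \Bigr) \| n \|_{L^2_{x,t}}^2 \| m \|_{L^2_{x,t}}^2 . \]

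It then remains only to establish the uniform bound $\sup_{k,\tau} |E(k,\tau)| \lesssim NL$. Fix $(k,\tau)$. The condition $|k_1| \approx N$ by itself restricts $k_1$ to $O(N)$ integers, and for each such $k_1$ the single condition $|\tau_1 \pm |k_1|| \approx L$ confines $\tau_1$ to a union of at most two intervals of total length $\lesssim L$; the remaining conditions only shrink this set further. Hence $|E(k,\tau)| \lesssim N \cdot L$, uniformly in $(k,\tau)$, and taking square roots finishes the argument. The reasoning is insensitive to the choice of signs in $|\tau_1 \pm |k_1||$ and $|(\tau-\tau_1) \pm |k-k_1||$, so it applies whether $n$ and $m$ are of the same $\pm$ type or of opposite type.

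I do not anticipate any genuine difficulty; the only step needing a moment's thought is the count $|E(k,\tau)| \lesssim NL$, and that bound is in fact essentially sharp (take $\t n = \t m$ the indicator of a thin parallelogram of the form $\{ k \approx N,\ \tau - k \approx L \}$). The reason one cannot do better --- and the reason the factor $N^{1/2}$ appears, unlike in the Schr\"odinger periodic $L^4$ Strichartz estimate \cite[Lemma 2.11]{Kish2} --- is simply that the characteristic set $\{ \tau = \pm |k| \}$ of the one-dimensional wave operator is a union of two straight lines and carries no curvature to be exploited.
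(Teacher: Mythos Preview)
The proposal is correct and follows essentially the same route as the paper: Plancherel, Cauchy--Schwarz on the convolution, and the elementary count $|E(k,\tau)|\lesssim NL$ coming from the $O(N)$ choices of $k_1$ and the $O(L)$ range of $\tau_1$. Your write-up is slightly more explicit about the pointwise Cauchy--Schwarz step and the sharpness/lack-of-curvature remark, but the argument is the same.
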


\begin{proof}
	We have, using Cauchy-Schwartz and Young's inequalities,
	\begin{align*}
	\| nm \|_{L^2_{x,t}} 
	&= \| \hat{nm}\|_{L^2_{\tau} \ell^2_k} = \| \hat{n} \star \hat{m}\|_{L^2_{\tau} \ell^2_k}\\
	&\leq \sup_{|k| \approx N, \tau} | B(k, \tau)|^{\frac12} \| \hat{n}\| _{L^2_\tau\ell^2_k}\| \hat{m}\|_{L^2_\tau\ell^2_k},
	\end{align*}
where 
\[ B(k,\tau) = \Bigl\{ (k_1,\tau_1) \; \Big| \; | k_1|,\, |k-k_1| \approx N, \; |\tau_1 \pm |k_1||,\, | \tau - \tau_1 \pm |k-k_1|| \approx L \Bigr\}. \] 
We can bound the size of the set by noting that $\tau_1$ can range in an interval of length at most $L$ and $k_1$ can range over an interval of size $N$. This gives 
\begin{align*}
|B(k,\tau) | \leq LN.
\end{align*}
Inserting this bound on $|B|$ into the estimate above gives the desired result. 
\end{proof}
Applying this Strichartz estimate to a frequency-constrained function $n$, we obtain
\[ \| n \|_{L^4_{x,t}} \lesssim N^\frac14 L^\frac14 \| n\|_{L^2_{x,t}} \approx N^\frac14 \| n\|_{Y^{0,\frac14, 1}_{\pm}}. \]
Note that in the $\T^2$ case, Kishimoto instead obtained 
\[ \| n \|_{L^4_{x,t}} \lesssim N^\frac38 \| n\|_{Y^{0,\frac38, 1}_{\pm}}. \]
Lowering the power $\frac38$ to $\frac14$ saves us a factor of $N^\frac18$ and gives us an additional power of $\delta^\frac18$. Since the $L^4_{x,t}$ estimate is used on two terms in a product, the net result is gain of $N^{-\frac14}\delta^{\frac14} $ in the estimate, as desired.


\end{document}